\renewcommand{\H}{\mathcal{H}}
\newcommand{\Q}{\mathbb{Q}}
\newcommand{\Z}{\mathbb{Z}}
\newcommand{\N}{\mathbb{N}}
\newcommand{\C}{\mathbb{C}}
\newcommand{\R}{\mathbb{R}}
\newcommand{\SL}{\mathrm{SL}}
\newcommand{\boV}{\mathcal{V}}
\newcommand{\Mod}{\mathrm{Mod}}
\newcommand{\PU}{\mathrm{PU}}
\newcommand{\SU}{\mathrm{SU}}
\renewcommand{\O}{\mathrm{O}}
\DeclareMathOperator{\Tr}{Tr}
\DeclareMathOperator{\Hom}{Hom}
\DeclareMathOperator{\Sign}{Sign}
\renewcommand{\epsilon}{\varepsilon}
\def\newsearrow{\raisebox{2pt}{{\scalebox{0.6}{$\searrow$}}}}
\def\newsmallsearrow{\raisebox{1pt}{{\scalebox{0.5}{$\searrow$}}}}
\newtheorem{Theorem}{Theorem}[section]
\newtheorem*{Theoremstar}{Theorem}
\newtheorem{Proposition}[Theorem]{Proposition}
\newtheorem{Remark}[Theorem]{Remark}
\newtheorem{Lemma}[Theorem]{Lemma}
\newtheorem{Conjecture}[Theorem]{Conjecture}
\newtheorem{Corollary}[Theorem]{Corollary}
\newtheorem*{Conjecturestar}{Conjecture}
\newtheorem{Question}[Theorem]{Question}
\title{Signatures in TQFT : Asymptotics and Modularity}
\author{Julien Marché}
\address{Département de mathématiques et applications, Ecole Normale Supérieure, Université PSL, CNRS, Sorbonne Université, 75005 Paris, France}
\email{julien.marche@ens.psl.eu}
\author{Gregor Masbaum}
\address{Sorbonne Université, Université Paris Cité, CNRS, IMJ-PRG, F-75005 Paris, France}
\email{gregor.masbaum@imj-prg.fr}
\date{February 21, 2026. First version: December 15, 2025}
\begin{document}

\begin{abstract}
  We study the signature $\sigma_g(\frac q p)$ of $\mathrm{SU}_2$-TQFT vector spaces associated to surfaces of genus $g$, as a function of the defining root of unity $\zeta=e^{i\pi q/p}$. We prove that $\frac{1}{p^2}\sigma_2(\frac{q}{p})$ converges to $\Lambda(\theta)=\frac{16}{\pi^3}\sum\limits_{n\ge 1, \textrm{ odd}}\frac{1}{n^3\sin(n\pi\theta)}$ when $\frac{q}{p}$ goes to an irrational number  $\theta\in [0,1]$ under certain conditions. We also observe that the function $\Lambda(\theta)$   is the boundary value of an Eichler integral of a level $2$ modular form of weight $4$, and use this to propose a conjectural transformation law for the signature function in genus~2 similar to the reciprocity formula for classical Dedekind sums. 
\end{abstract}
\maketitle

\tableofcontents

\section{Introduction}

Let $p$ be an odd positive integer and $\Q(\zeta)$ be the cyclotomic field generated by a primitive root of unity $\zeta$ of order $2p$. The SU$_2$-TQFT constructs for any surface $S_g$ of genus $g$ a finite dimensional $\Q(\zeta)$-vector space $\boV_p(S_g)$ endowed with an Hermitian form $h_g$ defined over $\Q(\zeta)$. These vector spaces are part of a modular functor, meaning that they support a projective unitary action of the corresponding mapping class groups  and satisfy a list of compatibilities when cutting the surfaces along simple closed curves.

The dimension of $\boV_p(S_g)$, given by the celebrated Verlinde formula, has received considerable attention since the nineties. On the other hand, taking $0<q<p$ two coprime odd integers, one can embed $\Q(\zeta)$ into $\C$ by sending $\zeta$ to $e^{i\pi q/p}$. The Hermitian form $h_g$ has a signature that we denote by  
$$\sigma_g(\textstyle{\frac{q}{p}})=\Sign\Big(\boV_p(S_g)\!\!\underset{\zeta=e^{i\pi q/p}}{\otimes}\!\!\C,\,h_g\Big)$$
and which has been very little studied. It was shown in \cite{DM} that for a fixed $\zeta=e^{i\pi q/p}$, these signatures have cut and paste properties very similar to those of the dimensions, which allows to encode them in a single Frobenius algebra $V_{\frac q p}$. Later it was proved in \cite{S2B} that $V_{\frac q p}$ is closely related to the hyperbolic geometry of the two-bridge knot $K(p,q)$. The purpose of this paper is to study signatures in a different direction, by analyzing the asymptotic behavior of $\sigma_{g}(\frac{q}{p})$ when $\frac{q}{p}\to \theta$ for a fixed $\theta\in [0,1]$.

If we take $q=1$, the Hermitian form $h_g$ becomes positive definite,
so that $\sigma_g(\frac 1 p)$ coincides with the dimension of $\boV_p(S_g)$.  The problem of computing the asymptotic behavior of the dimensions when $p\to \infty$ is well-known, and related to considerations of semi-classical analysis on character varieties $\mathcal{M}_g=\Hom(\pi_1(S_g),\SU_2)/\SU_2$. This led Witten  \cite{Witten} to show that for $g>1$,  
$$\lim_{p\to \infty}\frac{\dim \boV_p(S_g)}{p^{3g-3}}=\operatorname{Vol}(\mathcal{M}_g)=2(2\pi^2)^{1-g}\zeta(2g-2).$$

In genus $0$ and $1$, the hermitian form is positive definite for all values of $q$, so let us consider the first non trivial case $g=2$. In this case, one can write a simple formula for  $\sigma_2(\frac q p)$:
\begin{equation}\label{defsigma2}
  \sigma_2({\textstyle{\frac{q}{p}}})=\sum_{(j,k,\ell)\in \Delta_p}
 (-1)^{\lfloor {jq}/{p}\rfloor +\lfloor {kq}/{p}\rfloor+\lfloor {\ell q}/{p}\rfloor}
\end{equation}
 where the sum runs over the set $\Delta_p$ of triples of integers satisfying 
$$\begin{cases}0<j,k,\ell<p, \\
j<k+\ell,k<j+\ell,\ell<j+k,\\
j+k+\ell<2p,\textrm{ and } j+k+\ell\textrm{ odd}.
\end{cases}$$

Our main result is the following 

\begin{Theoremstar}
For almost all irrational $\theta\in [0,1]$, if we denote by $q_k/p_k$ the sequence of convergents of the continued fraction expansion of $\theta$, then one has
$$\lim_{k\to \infty}\frac{\sigma_2(q_k/p_k)}{p_k^2}=\frac{16}{\pi^3}\sum_{n\ge 1 \textrm{ odd}}\frac{1}{n^3\sin(n\pi\theta)}.$$
\end{Theoremstar}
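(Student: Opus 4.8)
The plan is to linearise the signs $(-1)^{\lfloor jq/p\rfloor}$ by means of the Fourier series of a square wave, reduce the resulting lattice sums to coefficient extractions from explicit rational functions, and show that only a diagonal family of Fourier modes contributes to the leading order $p^2$.

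First I would pass to coordinates in which $\Delta_p$ becomes a simplex: the substitution $(j,k,\ell)\mapsto(x,y,z)=(-j+k+\ell,\,j-k+\ell,\,j+k-\ell)$ is a bijection from $\Delta_p$ onto the set $T_p$ of triples of odd positive integers with $x+y+z<2p$ (the triangle inequalities become $x,y,z>0$, the parity of $j+k+\ell$ becomes the parity of $x,y,z$, and $0<j,k,\ell<p$ follows from $x,y,z>0$ and $x+y+z<2p$). Since $j=(y+z)/2$ satisfies $0<j<p$, the number $jq/p$ is never an integer, so $(-1)^{\lfloor jq/p\rfloor}=F\big(\tfrac{(y+z)q}{2p}\big)$, where $F(t)=\operatorname{sign}\sin(\pi t)=\tfrac4\pi\sum_{n\ \mathrm{odd}}\tfrac{\sin(n\pi t)}{n}$ is the $2$-periodic square wave. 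Then
\[
\sigma_2\big(\tfrac qp\big)=\sum_{(x,y,z)\in T_p}F\Big(\tfrac{(y+z)q}{2p}\Big)F\Big(\tfrac{(x+z)q}{2p}\Big)F\Big(\tfrac{(x+y)q}{2p}\Big),
\]
and, since the sum over $T_p$ is finite, I may substitute the Fourier series of each factor and interchange the order of summation to obtain
\[
\sigma_2\big(\tfrac qp\big)=\frac{64}{\pi^3}\sum_{n_1,n_2,n_3\ \mathrm{odd}}\frac{I_p(n_1,n_2,n_3)}{n_1n_2n_3},\qquad
I_p(n_1,n_2,n_3)=\sum_{(x,y,z)\in T_p}\sin a\,\sin b\,\sin c,
\]
with $a=\tfrac{\pi n_1 q}{2p}(y+z)$, $b=\tfrac{\pi n_2 q}{2p}(x+z)$, $c=\tfrac{\pi n_3 q}{2p}(x+y)$.

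Next I would evaluate $I_p$ through a generating function. Expanding $\sin a\,\sin b\,\sin c=\tfrac14\big(\sin(a{+}b{-}c)+\sin(a{-}b{+}c)+\sin(-a{+}b{+}c)-\sin(a{+}b{+}c)\big)$, each of the four sums $\sum_{T_p}e^{i(\pm a\pm b\pm c)}$ becomes, after writing $x=2\alpha-1,\ y=2\beta-1,\ z=2\gamma-1$ (so $\alpha,\beta,\gamma\ge1$, $\alpha+\beta+\gamma\le p+1$), a unit‑modulus constant times the coefficient of $u^{p+1}$ in
\[
G(u)=\frac{(\eta^{E_1}u)(\eta^{E_2}u)(\eta^{E_3}u)}{(1-\eta^{E_1}u)(1-\eta^{E_2}u)(1-\eta^{E_3}u)(1-u)},\qquad \eta=e^{i\pi q/p},
\]
where the $E_i$ are fixed (even) integer linear combinations of $n_1,n_2,n_3$ depending on the term. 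This coefficient is a sum of residues, of size $O(p^{m-1})$ with $m$ the largest order of a pole of $G$ on the unit circle, and I would check that $m=3$ — the only way to reach $p^2$ — occurs precisely when $n_1=n_2=n_3=:n$: for the three angles carrying a minus sign two of the $E_i$ vanish and $G$ has a triple pole at $u=1$, while for $a+b+c$ one has $E_1=E_2=E_3=2n$ and a triple pole at $u=\eta^{-2n}$. In every other case $m\le2$, so $I_p(n_1,n_2,n_3)=O(p)$ and contributes nothing in the limit. A short residue computation, using $\eta^{p}=-1$, then shows that in the diagonal case the four angles contribute equally and $I_p(n,n,n)=\dfrac{p^2}{4\sin(n\pi q/p)}+O(p)$. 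Hence for each fixed triple, along any sequence $q/p\to\theta$,
\[
\lim_{p\to\infty}\frac{I_p(n_1,n_2,n_3)}{p^2}=
\begin{cases}\dfrac{1}{4\sin(n_1\pi\theta)} & n_1=n_2=n_3,\\[1.5mm] 0 & \text{otherwise},\end{cases}
\]
and summing against the weights $\tfrac{64}{\pi^3 n_1n_2n_3}$, restricted to the diagonal, produces exactly $\tfrac{16}{\pi^3}\sum_{n\ \mathrm{odd}}\tfrac{1}{n^3\sin(n\pi\theta)}$.

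The last and hardest step is to justify interchanging $\lim_{k\to\infty}$ with the triple series over $(n_1,n_2,n_3)$; this is where the Diophantine hypothesis — hence the restriction to almost every $\theta$ — enters, and where I expect the real difficulty to lie. Since $|q_k/p_k-\theta|<p_k^{-2}$ for convergents, as long as $\max_i n_i$ is moderate compared with $p_k$ the quantities $|\sin(n_i\pi q_k/p_k)|$ and $|1-\eta^{E_i}|$ stay comparable to their nonzero limits, and the bound $\|m\theta\|\gg m^{-1-\varepsilon}$ (valid for a.e.\ $\theta$) makes the diagonal series and the subleading contributions summable with a dominating function. The genuine obstacle is a bound on the high-frequency tail, namely $\sum_{\max_i n_i>M}\frac{|I_{p_k}(n_1,n_2,n_3)|}{p_k^{\,2}\,n_1n_2n_3}\to0$ as $M\to\infty$ \emph{uniformly in $k$}: the trivial estimate $|I_{p_k}|\le|T_{p_k}|=O(p_k^{3})$ is far too weak, so one must instead extract cancellation from the oscillation of the truncated tail $F-F_M$ against the geometry of the simplex $T_p$ — equivalently, re-sum the tail through the rational functions $G$ above and control the residues by the Diophantine estimates for $\theta$. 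Granting such a uniform tail bound, an $\varepsilon/3$ argument (truncate at $\max_i n_i\le M$, pass to the limit in the finitely many surviving modes, then let $M\to\infty$) finishes the proof, giving $\sigma_2(q_k/p_k)/p_k^2\to\tfrac{16}{\pi^3}\sum_{n\ \mathrm{odd}}\tfrac1{n^3\sin(n\pi\theta)}$.
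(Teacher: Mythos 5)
Your reduction to a trigonometric sum is plausible and the diagonal computation does reproduce the correct main term (it is consistent with the exact formula one gets by a finite Fourier transform mod $2p$ followed by Brion's theorem, which yields $\sigma_2(\frac qp)=\frac{1-p^2}{6p^2}+\frac1{4p^2}\sum_{n\,\mathrm{odd}}^{p-2}f(n,p,q)\sin^{-3}(\frac{n\pi}{2p})\sin^{-2}(\frac{qn\pi}{p})$, a \emph{finite} single sum rather than your triple infinite series). But the proof is not complete: the step you yourself flag --- the tail bound $\sum_{\max_i n_i>M}|I_{p_k}(n_1,n_2,n_3)|/(p_k^2\,n_1n_2n_3)\to0$ uniformly in $k$ --- is the actual mathematical content of the theorem, and nothing in your sketch supplies it. The estimate $\|m\theta\|\gg m^{-1-\varepsilon}$ for a.e.\ $\theta$ is not sufficient on its own. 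What is needed (and what the ``almost all $\theta$'' hypothesis is really for, beyond $\mu(\theta)=2$) is quantitative control of the continued fraction of $\theta$ itself: Khinchin's theorem that $a_k<k^2$ for all but finitely many $k$, the Khinchin--L\'evy theorem that $p_k$ grows like $L^k$ with $L>1$, Lagrange's best-approximation inequality $|nq_k/p_k-\ell|\ge|p_iq_k/p_k-q_i|\ge(p_i+p_{i+1})^{-1}$ for $n<p_{i+1}$, and a Zaremba-type lower bound for the \emph{product} of two sines, $\sin^2(\frac{\pi n}{2p_k})\sin^2(\frac{q_kn\pi}{p_k})\ge 4p_k^{-2}(a_{i+1}+2)^{-2}$ for $p_i\le n<p_{i+1}$, which handles the regime where both factors are simultaneously small. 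One then splits the sum at $n=p_{\lfloor 3k/4\rfloor+1}$ and plays the polynomial bound $O(k^7)$ from Khinchin against the exponential gap $p_{N(k)}/p_k$ from L\'evy. Without an argument of this type the interchange of limit and sum is unjustified.

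A second, related problem: your dichotomy ``triple pole if and only if $n_1=n_2=n_3$'' is false at finite $p$. Since $\eta=e^{i\pi q/p}$, the exponents $E_i$ only need to vanish modulo $2p$ (after multiplication by $q$) for poles to collide, so off-diagonal triples with, say, $n_1\equiv n_2\pmod p$ also produce triple poles and hence order-$p^2$ contributions; and the residues at the merely double poles carry small denominators of the form $|1-\eta^{E_i}|^{-1}$ whose size depends on how well $E_i q/2p$ approximates an integer. These resonances live exactly in the high-frequency tail, so they cannot be dismissed by the fixed-triple limit $I_p(n_1,n_2,n_3)=O(p)$; they are the reason the uniform tail bound is a genuine small-denominator problem rather than a routine $\varepsilon/3$ argument. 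I would encourage you either to carry out the tail estimate with the Diophantine machinery above, or to first collapse the triple sum to a single finite sum (via the finite Fourier transform mod $2p$ and Brion's formula), where the same difficulty appears but in a much more tractable form.
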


Figure \ref{fig:sigma2}  suggests that the convergence is rapid: the graph of the map on the right hand side is shown in green whereas red dots represent the points $(q/p,\sigma_2(q/p)/p^2)$ for $0<q<p<31$. 

\begin{figure}[htbp]
\begin{center}
\includegraphics[width=10cm]{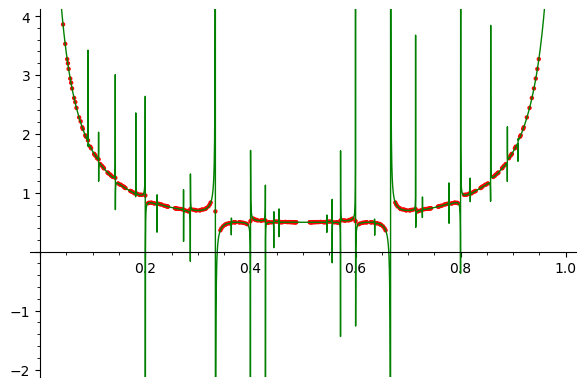}
\caption{Graph of the signature function $\sigma_2$}\label{fig:sigma2}
\end{center}
\end{figure}

The proof of this theorem involves several steps and occupies Sections~\ref{genre2} and~\ref{sec2}. We first apply in Section~\ref{genre2} a finite Fourier transform in Equation \eqref{defsigma2} to transform this expression into a sum of evaluations of a rational function at roots of unity indexed by $\Delta_p$. We then apply Brion's theorem to get an explicit trigonometric sum computing $\sigma_2(q/p)$; this step  presents miraculous cancellations for which we do not have any conceptual explanation. 

Then in Section~\ref{sec2}, we provide the asymptotic analysis needed to prove the theorem. Here we use a simplification of our trigonometric formula which was pointed out to us by Y. Murakami \cite{YMu}. This simplification makes the asymptotic analysis much easier than in our original proof.

In higher genus one can also write down a formula for
  $\sigma_g(q/p)$ as the sum of signs indexed by lattice points inside some polytope (see \cite[Remark 4.12]{BHMV}~\footnote{Our $\boV_p$ corresponds to the $V_{2p}$ of \cite{BHMV}.}.) But the signs are much more complicated than in Formula \eqref{defsigma2} which prevents us from generalizing our theorem. We briefly discuss in Section~\ref{sec5} how the asymptotics might look like for general surfaces including marked points, and illustrate this by some numerical experiments.

In the last section of the paper, we discuss some modular properties of the signature. We observe that the function 
$$\Lambda(\theta)=\frac{16}{\pi^3}\sum_{n\ge 1\textrm{ odd}}\frac{1}{n^3\sin(n\pi\theta)}$$
is the boundary value of an Eichler integral of a modular form of weight $4$ for the level $2$ group $\Gamma(2)$. In addition to the obvious property $\Lambda(\theta+1)=-\Lambda(\theta)$, it satisfies the following transformation law:
\begin{equation}\label{Lambdamod}
\Lambda\Big(\frac{\theta}{2\theta+1}\Big)(2\theta+1)^2-\Lambda(\theta)=2\theta^2+2\theta+1.
\end{equation}
This functional equation suggests that a similar integral version holds for signatures. In the first version of this paper (of December 15, 2025), we made the following conjecture,  which we had checked by computer for all $0<q<p<100$. 
\begin{Conjecturestar}
For any odd coprime integers $0<q<p$ one has
\begin{equation}\label{conj-form} \sigma_2\Big(\frac{q}{2q+p}\Big)-\sigma_2\Big(\frac{q}{p}\Big)=2q^2+2qp+p^2-1~.
  \end{equation}
\end{Conjecturestar}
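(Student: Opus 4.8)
The plan is to recognise the conjectured identity as a \emph{reciprocity law} for the signature function, in the spirit of Dedekind's reciprocity for $s(q,p)$ and, at the level of boundary values, of the functional equation \eqref{Lambdamod}. I would start from the explicit trigonometric expression for $\sigma_2(q/p)$ produced by Theorem~\ref{sigma2}: writing $\sigma_2(q/p)=T(q,p)$, where $T(q,p)$ is a finite sum of products of cosecants and cotangents at rational angles with denominator $p$ and numerators governed by $q$, the conjecture becomes the closed identity $T(q,2q+p)-T(q,p)=2q^2+2qp+p^2-1$. The arithmetic input that makes this plausible is that $\frac{q}{2q+p}=\cfrac{1}{\,2+p/q\,}$: passing from $p$ to $2q+p$ adds $2$ to the first partial quotient of the continued fraction of $p/q$ and leaves the rest untouched, and in particular $2q+p\equiv p\pmod{2q}$, so the part of $T$ depending only on residues modulo $2q$ (morally the ``reduced'' part of the sum) does not move and only the genuinely $p$-dependent part changes.

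To turn this into a proof I would run a Rademacher-type contour argument: construct a rational function $R_{q,p}(z)$ on $\C^\times$ whose residues reproduce (a suitable normalisation of) $T(q,p)$ --- for the genus-$2$ signature one expects $R_{q,p}$ to be essentially a product of three factors of the form $(z^{a}-z^{-a})^{-1}$ with exponents built from $q$, $p$ and the triangle-and-parity conditions cutting out $\Delta_p$ in \eqref{defsigma2}. Setting the sum of residues equal to zero yields a reciprocity identity; substituting $p\mapsto 2q+p$ changes only the poles ``at $p$'', the poles ``at $q$'' being common to both sums, so that $T(q,2q+p)-T(q,p)$ collapses to the residue of $R$ at $z=1$ (together with $z=-1$, forced by the odd-$p$ / odd-sum parity constraints), which is a polynomial in $q$ and $p$; evaluating it and matching with $2q^2+2qp+p^2-1$ would finish the argument. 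A purely combinatorial variant would instead exhibit, using $\frac{q}{2q+p}=\frac1{2+p/q}$, a slicing of the polytope $\Delta_{2q+p}$ into a rescaled copy of $\Delta_p$ on which the sign $(-1)^{\lfloor jq/(2q+p)\rfloor+\lfloor kq/(2q+p)\rfloor+\lfloor\ell q/(2q+p)\rfloor}$ restricts to $(-1)^{\lfloor jq/p\rfloor+\lfloor kq/p\rfloor+\lfloor\ell q/p\rfloor}$, plus a ``collar'' region whose signed lattice-point count is exactly $q^2+(q+p)^2-1$; the suggestive form $2q^2+2qp+p^2=q^2+(q+p)^2$ of the right-hand side hints that such a decomposition exists.

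As a consistency check --- and a partial result --- the conjecture already follows up to $o(p_k^2)$ from the main theorem. The Möbius transformation $\theta\mapsto\frac{\theta}{2\theta+1}$ lies in $\SL_2(\Z)$, hence preserves the Lebesgue measure class, and since it only replaces the first partial quotient $a_1$ of $\theta=[0;a_1,a_2,\dots]$ by $a_1+2$ it leaves the tail of the continued fraction unchanged, so the growth hypothesis of the main theorem is preserved; moreover $q_k/(2q_k+p_k)$ is exactly the $k$-th convergent of $\frac{\theta}{2\theta+1}$. Applying the main theorem to both $\theta$ and $\frac{\theta}{2\theta+1}$ gives, for almost all $\theta$, $\frac{1}{p_k^2}\bigl(\sigma_2(q_k/(2q_k+p_k))-\sigma_2(q_k/p_k)\bigr)\to(2\theta+1)^2\Lambda\!\left(\tfrac{\theta}{2\theta+1}\right)-\Lambda(\theta)$, which by \eqref{Lambdamod} equals $2\theta^2+2\theta+1=\lim_k p_k^{-2}(2q_k^2+2q_kp_k+p_k^2-1)$. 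So the real content of the conjecture is that this error term is \emph{exactly} the stated polynomial, and that the identity holds for \emph{all} coprime odd $0<q<p$ rather than only for typical $\theta$ in the limit.

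The main obstacle, on either route, is the one the authors flag after Theorem~\ref{sigma2}: the cancellations that produce $T(q,p)$ are ``miraculous'', and reproducing them uniformly across the two-parameter family $(q,p)$ and $(q,2q+p)$ --- or, on the lattice-point side, controlling the highly oscillatory, fractal-like sign $(-1)^{\lfloor jq/p\rfloor+\lfloor kq/p\rfloor+\lfloor\ell q/p\rfloor}$ under the \emph{non-homogeneous} rescaling $p\mapsto 2q+p$ of the polytope --- is the delicate point. A secondary difficulty is keeping track of the parity constraints ($p$, $q$ and $j+k+\ell$ all odd): these are what force the $z=-1$ contribution above, and a clean reciprocity may require phrasing $T$ in terms of residues modulo $2q$ rather than $q$, so that the ``reduced'' data genuinely stabilise under $p\mapsto 2q+p$. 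Finally, it would be worth testing whether the relation between $\sigma_2(q/p)$ and the hyperbolic geometry of the two-bridge knot $K(p,q)$ from \cite{S2B} exhibits the polynomial $2q^2+2qp+p^2-1$ directly as a Wall non-additivity / Maslov-index correction term for the full twist relating $K(2q+p,q)$ to $K(p,q)$; if so, this could give the cleanest proof.
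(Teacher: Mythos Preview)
The statement you are asked to prove is labelled a \emph{Conjecture} in the paper and is explicitly left open: the authors say it has been ``checked by computer for all $0<q<p<100$'' and write ``Beyond proving this conjecture, it would be very interesting to find a direct relation between $\sigma_2(\frac{q}{p})$ and the Eichler integral $G$\ldots''. There is therefore no proof in the paper to compare your proposal against.

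Your proposal is honest about this: it is a research plan, not a proof. The asymptotic consistency check you give (that the conjecture holds up to $o(p_k^2)$ along convergents of a generic $\theta$, via Theorem~\ref{asympto} and the functional equation \eqref{Lambdamod}) is exactly the observation the paper makes in the Remark following Conjecture~\ref{6.8}; indeed, this is how the authors arrived at the conjecture in the first place. So that part is correct but not new.

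The two strategies you sketch (a Rademacher-type residue/contour argument on a rational function with cosecant-like factors, and a combinatorial slicing of $\Delta_{2q+p}$ into a copy of $\Delta_p$ plus a collar of signed count $q^2+(q+p)^2-1$) are both plausible lines of attack, and neither is pursued in the paper. But you have not carried either of them out: you have not written down the rational function $R_{q,p}$, verified that its residues reproduce $T(q,p)$, or computed the residue at $z=\pm 1$; nor have you exhibited the claimed slicing of the polytope or shown that the sign pattern restricts as asserted. The obstacles you yourself list (the ``miraculous'' cancellations of Theorem~\ref{sigma2}, the non-homogeneous rescaling $p\mapsto 2q+p$, the parity constraints) are exactly the reasons these sketches do not yet constitute a proof. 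In short: the paper has no proof, and neither do you; what you have is a reasonable outline of where one might look.
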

In early February 2026, we were informed by Y. Murakami \cite{YMu} that he has obtained a proof of this conjecture starting from our trigonometric formula for $\sigma_2(\textstyle{\frac{q}{p}})$.

Our conjectured formula \eqref{conj-form} is reminiscent of the reciprocity formula of Dedekind sums: in fact we will make this analogy more precise in Section~\ref{SectionS}. We prove that the signature at $\zeta=e^{i\pi q/p}$ of  the TQFT-vector space associated to the 4 times punctured sphere (with  colors $(p-1)/2$  at the marked points)  is actually equal to a 2-smoothed version, $S(q/p)$,  of the Dedekind sum $s(q,p)$. It is then classical that  this function $S(q/p)$ is the boundary value of a modular form (this time of weight $1/2$) and that it satisfies the following transformation law

$$S\Big(\frac{q}{2q+p}\Big)-S\Big(\frac{q}{p}\Big)=1.$$
We provide an elementary proof of these facts for the benefit of the non-specialist reader. This very satisfactory state of things where signatures have modular properties would hopefully generalize to higher genus as well.

To end this introduction, we would like to mention the parallel paper \cite{MY} in which the first author and S. Yoon compute the asymptotics of the function
$\sigma_g(q/p)$, but for sequences of the form $\frac{q_n}{p_n}=\frac{a+bn}{c+dn}$ where $\left(\begin{smallmatrix} a & b \\ c & d \end{smallmatrix}\right)\in \Gamma(2)$. This regime is somewhat orthogonal to the one described in the present paper. The techniques used and the results are also very different. A synthesis of the two papers has yet to be made.
\vskip 8pt

{\bf Acknowledgements:} It is our pleasure to thank Pierre Charollois for early discussions on many aspects of this paper, Gaëtan Chenevier for his expertise on modular forms, 
and Don Zagier for very helpful explanations about Eichler integrals and quantum modular forms. We also thank Yuya Murakami for sharing a draft of his paper with us in early February 2026.

\section{Signatures in TQFT 
  and signed Verlinde algebras}\label{frobenius}

In this section, we recall the general formula for the signature which is used in the proof of Formula \eqref{defsigma2}
for $\sigma_2(\frac q p)$ (proved in Lemma \ref{triplepsilon} below). The reader willing to take Formula \eqref{defsigma2} for granted may  skip this section at first reading.

Given an odd integer $p$ and a coprime odd integer $q$ satisfying $0<q<p$, we set $\zeta=e^{i\pi q/{p}}$. We define a semi-simple, commutative Frobenius algebra over the rationals $V_{\frac q p}$ in the following way. 

As a $\Q$-vector space, $V_{\frac q p}$ has dimension $p-1$, with basis $e_0,e_1, \ldots, e_{p-2}$. We endow $V_{\frac q p}$ with a non-degenerate bilinear symmetric form $\eta$ such that $\eta(e_j,e_k)=0$ for $j\ne k$ and 
\begin{equation}\label{etasign}\eta(e_j,e_j)=(-1)^j \epsilon_{j+1},\text{ where }\epsilon_j=(-1)^{\lfloor \frac{jq}{p}\rfloor}=\Sign([j]).\end{equation}
Here we have used the quantum integer $[j]=\frac{\zeta^j-\zeta^{-j}}{\zeta-\zeta^{-1}}$. In the sequel, we will also use the quantum factorial $[n]!=[1][2]\cdots[n]$.

Let $T_p$ be the set of triples $(j,k,\ell)$ of integers satisfying $0\le j,k,\ell\le p-2$, $j+k+\ell\le 2p-4$, $j+k+\ell$ even, $\ell \le j+k,j\le k+\ell,k\le j+\ell$. 
Notice that increasing all indices by $1$ gives a bijection from $T_p$ to the set $\Delta_p$ defined in the introduction. From now on, we follow the TQFT convention that index colors (corresponding to the basis vectors of $V_{\frac q p}$) from $0$ to $p-2$ rather than from $1$ to $p-1$.

We define a trilinear symmetric form $\omega$ on $V_{\frac q p}$ by setting 
$\omega(e_j,e_k,e_\ell)=0$ if $(j,k,\ell)\notin T_p$, and otherwise 
$$ \omega(e_j,e_k,e_\ell)=(-1)^{\frac{j+k+\ell}{2}}\Sign\Big(\frac{[\frac{j+k+\ell}{2}+1]![\frac{j+k-\ell}{2}]![\frac{j+\ell-k}{2}]![\frac{\ell+k-j}{2}]!}{[j]![k]![\ell]!}\Big)$$

It was proven in \cite{DM} that the product $\cdot$ implicitly defined by the formula $\omega(x,y,z)=\eta(x\cdot y,z)$ defines a structure of commutative, associative, semi-simple Frobenius algebra on $V$, with unit $e_0$ and co-unit $\epsilon:V_{\frac q p}\to \Q$ defined by $\epsilon(e_0)=1$ and $\epsilon(e_j)=0$ for $j>0$. 

This Frobenius algebra was designed for computing signatures of TQFT. Recall that the Witten-Reshetikhin-Turaev SU$_2$-TQFT provides a compatible family of projective unitary representations $$\rho_p^\lambda:\Mod(S_{g,n})\to \PU(\boV_p(S_{g,n},\lambda))$$
In this formula $S_{g,n}$ denotes a compact oriented topological surface of genus $g$ with $n$ marked points and $\lambda=(\lambda_1,\ldots,\lambda_n)$ is a coloring of these points, that is,  a collection of $n$ integers in $\{0,1, \ldots, p-2\}$. The group $\Mod(S_{g,n})$ is the usual mapping class group fixing the marked points and $\boV_p(S_{g,n},\lambda)$ is a finite dimensional vector space over the cyclotomic field $K$ of order $2p$.  It is endowed with an invariant Hermitian form $h_g$ defined over $K$. Once $K$ is embedded in $\C$ by choosing the  defining root to be $\zeta=e^{i\pi q/p}$, the Hermitian form on $\boV_p(S_{g,n},\lambda)\otimes_K\C$ has signature $\sigma_{g,n}(\frac q p;\lambda)$ which strongly depends on the choice of $q$. Notice that as a vector space over $K$, $\boV_p(S_{g,n},\lambda)$ does not depend on the choice of $q$, which justifies the notation. 

Any Frobenius algebra has a special element $\Omega$ which can be defined by the formula $\Omega=\sum x_iy_i$ where $\eta^{-1}=\sum  x_i\otimes y_i \in V\otimes V$. As $V$ is semi-simple, $\Omega$ is invertible and satisfies $\epsilon(x)=\Tr_{V/\Q}(\Omega^{-1}x)$ for any $x\in V$. 

\begin{Theorem}[\cite{DM}]\label{DM1}
One has for any $g,n\ge 0$ and any $\lambda_1,\ldots,\lambda_n\in \{0,\ldots,p-2\}$:
$$\sigma_{g,n}({\textstyle\frac q p}; \lambda_1,\ldots,\lambda_n)=\epsilon(\Omega^g e_{\lambda_1}\cdots e_{\lambda_n})$$
where the right hand side is computed in the Frobenius algebra $V_{\frac q p}$.     
\end{Theorem}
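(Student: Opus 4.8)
The plan is to recognize the family $(S_{g,n},\lambda)\mapsto\sigma_{g,n}(\frac q p;\lambda)$ as a two-dimensional TQFT valued in $\Q$, whose underlying commutative Frobenius algebra is $V_{\frac q p}$. Granting this, the asserted formula is nothing but the universal expression for the invariant of a genus-$g$ surface carrying $n$ colored marked points: in any commutative Frobenius algebra $(V,\eta,\epsilon)$ that invariant is $\epsilon(\Omega^g v_1\cdots v_n)$, because each handle contributes the endomorphism ``multiply by $\Omega$'', each marked point contributes ``multiply by $v_i$'', and the remaining circle is capped off by the counit $\epsilon$; this is standard in the theory of two-dimensional TQFTs. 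So the work is to check that $\sigma$ obeys the gluing axioms and that its values on the elementary pieces match the structure maps $\eta$, $\omega$ (the pants form, $\omega(x,y,z)=\eta(x\cdot y,z)$) and $\epsilon$ of $V_{\frac q p}$.

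The key input is the behaviour of the Hermitian form $h_g$ under gluing. If $S$ is obtained from a surface $S'$ by gluing two boundary circles, the gluing axiom of the modular functor gives $\boV_p(S)=\bigoplus_c\boV_p(S',\dots,c,c^{*})$, and one checks that this decomposition is orthogonal for the Hermitian forms and that on the $c$-summand the form is the one on $\boV_p(S',\dots,c,c^{*})$ rescaled by a scalar $w_c$ — the normalization of the $c$-colored gluing annulus. By multiplicativity of the signature under orthogonal direct sums and tensor products, together with $\Sign(\mu h)=\Sign(\mu)\Sign(h)$ for nonzero real $\mu$, this turns every cutting of $S_{g,n}$ along a curve into a sum over admissible colorings $c$ weighted by $\Sign(w_c)$. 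Iterating along a pants decomposition with spine $\Gamma$ gives
$$\sigma_{g,n}({\textstyle\frac q p};\lambda)=\sum_{c}\ \prod_{e}\Sign(w_{c(e)})\ \prod_{v}\sigma_{0,3}({\textstyle\frac q p};c|_v),$$
where $e$ runs over the edges of $\Gamma$, $v$ over its trivalent vertices, and $c|_v$ is the triple of colors at $v$; reading off the BHMV formula for the norm of the one-dimensional pants space identifies $\sigma_{0,3}(\frac q p;a,b,c)$ with $\omega(e_a,e_b,e_c)$. On the algebra side \eqref{etasign} makes $\eta$ diagonal with values $\pm1$, and after the index shift relating $T_p$ and $\Delta_p$ one has $\Sign(w_a)=\eta(e_a,e_a)$; hence $\Omega=\sum_a\eta(e_a,e_a)\,e_a^{2}$ is exactly the element that ``closes a handle together with its edge factor'' and $\epsilon$ caps a leg with the color $0$, so that expanding $\Omega^g e_{\lambda_1}\cdots e_{\lambda_n}$ through the structure constants $\omega(e_a,e_b,e_c)/\eta(e_c,e_c)$ and applying $\epsilon$ reproduces the displayed coloring sum. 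The degenerate cases $S_{0,0},S_{0,1},S_{0,2},S_{0,3}$ are checked directly against $\epsilon(e_0)=1$, $\epsilon(e_\lambda)=\delta_{\lambda,0}$, $\epsilon(e_\lambda e_\mu)=\eta(e_\lambda,e_\mu)$ and the definition of $\omega$, and independence of the chosen pants decomposition is automatic once $V_{\frac q p}$ is known to be commutative, associative and Frobenius — the cited result of \cite{DM}.

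The main obstacle is making the second step precise: verifying that the pants-decomposition basis is genuinely $h_g$-orthogonal and, above all, that the rescaling scalars $w_a$ carry exactly the signs recorded by $\eta$ and that the one-dimensional pants form has sign exactly $\omega$. This demands a careful accounting of the TQFT normalizations — the $\langle\,\cdot\,\rangle$-pairing of \cite{BHMV}, the framing-anomaly scalars $\kappa$, the value $\langle S^3\rangle$, the choice of graph basis vectors — and it is this accounting that forces the a priori mysterious sign twists $(-1)^a$ in \eqref{etasign} and $(-1)^{(j+k+\ell)/2}$ in the definition of $\omega$: they record the discrepancy between the natural unitary basis of $\boV_p$ and the combinatorial basis attached to the spine. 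The robust part — that $\sigma$ satisfies the gluing axioms at all — follows formally from the Hermitian modular-functor axioms and from $\Sign(h_1\otimes h_2)=\Sign(h_1)\Sign(h_2)$; everything delicate is concentrated in pinning down these scalar and sign factors.
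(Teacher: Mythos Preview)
The paper does not prove this statement: Theorem~\ref{DM1} is quoted from \cite{DM} and used as a black box. So there is no ``paper's own proof'' to compare against; the present paper only invokes the result (immediately afterwards, to derive Lemma~\ref{triplepsilon}).

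That said, your sketch is an accurate outline of the argument that \cite{DM} actually carries out. The strategy --- show that the assignment $(g,n,\lambda)\mapsto\sigma_{g,n}(\frac q p;\lambda)$ satisfies the $(1{+}1)$-TQFT gluing rules, identify the resulting commutative Frobenius algebra with $V_{\frac q p}$, and then read off the universal formula $\epsilon(\Omega^g e_{\lambda_1}\cdots e_{\lambda_n})$ --- is exactly right, and your identification $\Omega=\sum_a\eta(e_a,e_a)^{-1}e_a^2$ (which equals $\sum_a\eta(e_a,e_a)\,e_a^2$ since the diagonal entries are $\pm1$) is correct. You have also put your finger on where the real content lies: the orthogonality of the gluing decomposition for the Hermitian form, and the sign bookkeeping that forces the twists $(-1)^j$ in $\eta$ and $(-1)^{(j+k+\ell)/2}$ in $\omega$. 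What remains a sketch rather than a proof is precisely this bookkeeping: you assert that $\Sign(w_a)=\eta(e_a,e_a)$ and that the pants norm has sign $\omega(e_j,e_k,e_\ell)$, but you do not compute these signs from the BHMV normalizations. That computation is the substance of \cite{DM}, and without it one has the architecture of the proof but not the proof itself.
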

In the sequel, when $n=0$, we will denote the signature simply by $\sigma_g(\frac q p)$. 

\begin{Remark}{\em Just like $V_{\frac q p}$, the $K$-vector space $\boV_p(S_1)$ has dimension $p-1$ and is in a natural way a Frobenius algebra known as the Verlinde algebra of the TQFT. This algebra only depends on $p$, not on $q$. The Frobenius algebra $V_{\frac q p}$ (which does depend on $q$) may be thought of as a signed Verlinde algebra, because the structure constants of the multiplication in $V_{\frac q p}$ coincide  up to sign with those of the multiplication in $\boV_p(S_1)$ in the standard bases. For $q=1$,  $\boV_p(S_1)$ is isomorphic to $V_{\frac 1 p}\otimes K$ as a Frobenius algebra,
    hence $\sigma_{g,n}({\textstyle\frac 1 p}; \lambda)=\dim \boV_p(S_{g,n},\lambda)$ for all $g,n\geq 0$.
 }  \end{Remark}
In genus zero $\boV_p(S_0)$ is the ground field, with the canonical Hermitian form, and  in genus $1$ the Hermitian form on $\boV_p(S_1)$  is positive definite for any choice of $q$.
  So $\sigma_0(\frac q p)=1$ and $\sigma_1(\frac q p)=p-1$ don't depend on $q$. Thus, for surfaces without marked points   interesting things happen only for $g\ge 2$.

Let us illustrate the Frobenius algebra formalism by giving the proof of Formula \eqref{defsigma2} for $\sigma_2(\frac q p)$ mentioned in the introduction. After the shift of indices from $\Delta_p$ to $T_p$, Formula \eqref{defsigma2} reads as follows.

\begin{Lemma}\label{triplepsilon}
$$\sigma_2({\textstyle\frac q p})=\sum_{(j,k,\ell)\in T_p} \epsilon_{j+1}\epsilon_{k+1}\epsilon_{\ell+1}.$$
\end{Lemma}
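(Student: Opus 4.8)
The plan is to apply Theorem~\ref{DM1} in the case $g=2$, $n=0$, which gives $\sigma_2(\frac q p)=\epsilon(\Omega^2)$, and then to unwind the right-hand side explicitly in the standard basis of $V_{\frac q p}$. First I would recall that $\epsilon(x)=\Tr_{V/\Q}(\Omega^{-1}x)$ for all $x\in V$, so that $\epsilon(\Omega^2)=\Tr_{V/\Q}(\Omega)$. Since the bilinear form $\eta$ is diagonal in the basis $e_0,\dots,e_{p-2}$ with $\eta(e_j,e_j)=(-1)^j\epsilon_{j+1}=:\delta_j$ (note $\delta_j^2=1$), we have $\eta^{-1}=\sum_j \delta_j\, e_j\otimes e_j$, hence $\Omega=\sum_j \delta_j\, e_j\cdot e_j=\sum_j \delta_j\, e_j^2$. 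Therefore
$$\epsilon(\Omega^2)=\Tr_{V/\Q}(\Omega)=\sum_j \delta_j\,\Tr_{V/\Q}(e_j^2).$$

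Next I would compute $\Tr_{V/\Q}(e_j^2)$ using the multiplication $\cdot$ defined by $\omega(x,y,z)=\eta(x\cdot y,z)$. Writing $e_j^2=\sum_m c_{jj}^m e_m$ in the basis, the trace of multiplication by $e_j^2$ is $\sum_m (c_{jj}^m)\cdot(\text{coefficient of }e_m\text{ in }e_m)$... more precisely, multiplication by a basis-diagonal object is easiest handled via the formula $\Tr_{V/\Q}(x)=\sum_j \delta_j\,\eta(x\cdot e_j,e_j)=\sum_j \delta_j\,\omega(x,e_j,e_j)$ valid because $(\delta_j e_j)_j$ is the basis dual to $(e_j)_j$ under $\eta$. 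Applying this with $x=\Omega=\sum_i \delta_i e_i^2$ and using $\eta(e_i^2,\cdot)=\omega(e_i,e_i,\cdot)$ once more, one gets
$$\epsilon(\Omega^2)=\sum_{i,j}\delta_i\delta_j\,\omega(e_i,e_i\cdot e_j,e_j)=\sum_{i,j}\delta_i\delta_j\sum_m c_{ij}^m\,\omega(e_i,e_m,e_j),$$
but the cleanest route is to observe directly that $\epsilon(\Omega^2)$ equals the sum over all triples of the product of the three diagonal signs times the sign of $\omega$, namely $\sum_{(j,k,\ell)} \delta_j\delta_k\delta_\ell\,\omega(e_j,e_k,e_\ell)$ divided by the appropriate count — so I would instead expand $\Omega^2=\sum_{i,i'}\delta_i\delta_{i'} e_i^2 e_{i'}^2$ and pair against $\epsilon$ via $\epsilon(\prod)=\omega$-type evaluations, reducing everything to a single sum over $T_p$ of $\delta_j\delta_k\delta_\ell$ times $\omega(e_j,e_k,e_\ell)$.

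The key simplification is then that $\delta_j\delta_k\delta_\ell=(-1)^{j+k+\ell}\epsilon_{j+1}\epsilon_{k+1}\epsilon_{\ell+1}=\epsilon_{j+1}\epsilon_{k+1}\epsilon_{\ell+1}$ since $j+k+\ell$ is even on $T_p$, and that this combines with the sign $(-1)^{(j+k+\ell)/2}$ appearing in $\omega(e_j,e_k,e_\ell)$ together with the sign of that ratio of quantum factorials. The crucial claim — and what I expect to be the main obstacle — is that the product of $(-1)^{(j+k+\ell)/2}$ with $\Sign$ of the quantum-factorial ratio is identically $+1$ on $T_p$, i.e.\ $\omega(e_j,e_k,e_\ell)=\epsilon_{j+1}\epsilon_{k+1}\epsilon_{\ell+1}/(\delta_j\delta_k\delta_\ell)$ collapses so that every admissible triple contributes exactly $\epsilon_{j+1}\epsilon_{k+1}\epsilon_{\ell+1}$. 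This requires a sign computation for $\prod[n]!$-type expressions at $\zeta=e^{i\pi q/p}$: one writes $\Sign([n]!)=\prod_{m=1}^n\epsilon_m$ and checks that the exponents of $(-1)$ coming from the four factorials in the numerator, the three in the denominator, and the explicit $(-1)^{(j+k+\ell)/2}$ cancel modulo $2$ on $T_p$. I would carry this out by a careful bookkeeping of $\sum_{m=1}^{n}\lfloor mq/p\rfloor \bmod 2$ and the triangle/parity constraints defining $T_p$; this is the ``miraculous'' but elementary parity lemma underlying the clean formula, and once it is in hand the Lemma follows immediately from $\sigma_2(\frac q p)=\epsilon(\Omega^2)$.
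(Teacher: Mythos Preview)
Your algebraic route via $\sigma_2(\frac q p)=\epsilon(\Omega^2)=\Tr_{V/\Q}(\Omega)$ is perfectly sound and leads to the same place as the paper's argument, but you make a genuine slip in the expansion, and the ``miraculous parity lemma'' you then set out to prove is in fact \emph{false}. Carrying your computation through correctly: with $\delta_j:=\eta(e_j,e_j)=(-1)^j\epsilon_{j+1}$ one has $\Omega=\sum_j\delta_j\,e_j^2$ and, writing $e_j\cdot e_m=\sum_k c_{jm}^k e_k$ with $c_{jm}^k=\delta_k\,\omega(e_j,e_m,e_k)$,
\[
\Tr_{V/\Q}(\Omega)=\sum_m \delta_m\,\eta(\Omega\cdot e_m,e_m)
=\sum_{j,m}\delta_j\delta_m\sum_k c_{jm}^k\,\omega(e_j,e_k,e_m)
=\sum_{j,k,m}\delta_j\delta_k\delta_m\,\omega(e_j,e_k,e_m)^2.
\]
So the sum involves $\omega^2$, not $\omega$. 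Since $\omega(e_j,e_k,e_\ell)\in\{0,\pm1\}$ by definition, $\omega^2$ is simply the indicator of $T_p$, and together with $\delta_j\delta_k\delta_\ell=\epsilon_{j+1}\epsilon_{k+1}\epsilon_{\ell+1}$ on $T_p$ (parity of $j+k+\ell$) the lemma follows immediately. No sign bookkeeping on quantum factorials is needed.

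Your claimed identity $\omega(e_j,e_k,e_\ell)=+1$ on $T_p$ fails already for $q=1$: there all $[n]>0$, so $\omega(e_j,e_k,e_\ell)=(-1)^{(j+k+\ell)/2}$, and e.g.\ $\omega(e_1,e_1,e_0)=-1$. The paper's proof reaches the same $\omega^2$ formula directly by the TQFT picture: a genus~2 surface is two pairs of pants glued along three circles, so one contracts two copies of $\omega$ with three copies of $\eta^{-1}$, giving $\sigma_2(\frac q p)=\sum_{j,k,\ell}\omega(e_j,e_k,e_\ell)^2/(\eta(e_j,e_j)\eta(e_k,e_k)\eta(e_\ell,e_\ell))$. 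Your trace computation is the algebraic shadow of exactly this gluing; once the exponent on $\omega$ is fixed, the two arguments are the same.
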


\begin{proof}
The trilinear form $\omega:V^3\to\Q$ can be viewed in the standard equivalence between Frobenius algebras and (1+1)-dimensional TQFTs, as the image of a pair of pants, interpreted as a cobordism from $S^1\amalg S^1\amalg S^1$ to $\emptyset$. Contracting two copies of it using the bilinear form $\eta$ corresponds topologically to gluing two pairs of pants along their boundaries, hence to a genus 2 surface. In formulas, this gives (using the orthogonality of the basis):
$$ \sigma_2({\textstyle\frac q p})=\sum_{j,k,\ell} \frac{\omega(e_j,e_k,e_\ell)^2}{\eta(e_j,e_j)\eta(e_k,e_k)\eta(e_\ell,e_\ell)}.$$
The lemma follows from the fact that $\omega(e_j,e_k,e_\ell)=\pm 1$ if $(j,k,\ell)\in T_p$ and $0$ otherwise, together with the observation that $(-1)^{j+k+\ell}=1$ for $(j,k,\ell)\in T_p$. 
\end{proof}

\begin{Remark}\label{23}
{\em By definition, the signature depends on $\zeta=e^{i\pi q/{p}}$, that is,  on the choice of $q$ modulo $2p$, not just modulo $p$.  But $q$ and $2p-q$ give the same signature, because the formulas only depend on the signs of the quantum integers $[j]$, and these signs are preserved  by the transformation $q\mapsto 2p-q$ or equivalently
$\zeta\mapsto \zeta^{-1}$. Thus we may assume $0<q<p$.
} \end{Remark}
\begin{Remark}
  {\em
The $\mathrm{SU}_2$-TQFT $\boV_p$ corresponds in Conformal Field Theory to the Wess-Zumino-Witten model at odd level $\ell=p-2$. Similar $\mathrm{SU}_2$-TQFTs exist at  even levels, that is, when $p$ is even. Their signature functions can again be computed using the Frobenius algebra formalism of Theorem~\ref{DM1}.  But we won't consider even level $\mathrm{SU}_2$-TQFTs in this paper.
}\end{Remark}

\section{A trigonometric formula for the signature in genus 2}\label{genre2}
Let us now start our investigation of the asymptotic behavior of the signature function. The first result of this paper is the following trigonometric formula for the signature in genus $2$.

\begin{Theorem}\label{sigma2}
  For $0<q<p$ coprime odd integers, we have
\begin{equation}\label{f1sigma2} \sigma_2({{\textstyle\frac q p}})= \frac {1-p^2}{6p^2} +
  \frac 1 {4p^2}\sum_{n=1,\, \mathrm{ odd}}^{p-2}\frac{f(n,p,q)}{\sin^3(\frac{n\pi}{2p})\sin^2(\frac{qn\pi}{p})}
\end{equation}
where 
\begin{eqnarray*} f(n,p,q)&=(3p-3)\sin(\frac{(2q-1)n\pi}{2p})+(p+1)\sin(\frac{(2q-3)n\pi}{2p})\\
&+(p-1)\sin(\frac{(2q+3)n\pi}{2p})+(3p+3)\sin(\frac{(2q+1)n\pi}{2p}).
\end{eqnarray*}
\end{Theorem}

We thank Y. Murakami for pointing out the following simplification of this formula:
  \begin{Corollary}[{\cite[Proposition~7.1]{YMu}}]\label{Cor-M}
    For $0<q<p$ coprime odd integers, we have
  \begin{equation}\label{f3sigma2}
    \sigma_2({{\textstyle\frac q p}})= \frac 2 p \sum_{n=1,\, \mathrm{ odd}}^{p-2} \frac {\cot^3 (\pi n /2p) } {\sin (\pi n q/p)}
    \end{equation}
  \end{Corollary}
 \begin{proof} Applying standard addition formulas for the sine and cosine functions in $f(n,p,q)$ and regrouping terms,  the expression \eqref{f1sigma2} can be rewritten as
    \begin{equation}\label{f2sigma2}
      \sigma_2({{\textstyle\frac q p}})= \frac {1-p^2}{6p^2}
      + 
       \frac 2 {p^2} \sum_{n=1,\, \mathrm{ odd}}^{p-2} \frac {\cos(\pi q n/p)} {\sin^2 (\pi q n/p)}
       +  \frac 2 p \sum_{n=1,\, \mathrm{ odd}}^{p-2}
       \frac {\cot^3 (\pi n /2p) } {\sin (\pi n q/p)} ~.
        \end{equation} But the first term and the first sum in \eqref{f2sigma2} cancel thanks to  the following trigonometric identity:
    \begin{Lemma}[{\cite[Equation~(7.1)]{YMu}}] For any odd $p>0$, one has 
        \begin{equation}\nonumber \sum_{m=1,\, \mathrm{ odd}}^{p-2} \frac {\cos(\pi m/p)} {\sin^2 (\pi m/p)} = \frac {p^2-1}{12}~.
          \end{equation}
\end{Lemma}
\end{proof}

\begin{proof}[Proof of Theorem~\ref{sigma2}]
By a finite Fourier transform over $\Z/2p\Z$, we can write for $m\in\Z$ :
\begin{equation}\label{FFT}
  (-1)^{\lfloor m/p\rfloor}=
  \frac 1 p
  \sum_{n=1,\text{ odd}}^{2p-1}\frac{\sin(\pi n(2m+1)/2p)}{\sin(\pi n/2p)}
  =\frac 1 p  \sum_{n=1,\text{ odd}}^{2p-1} \frac {t^{2m+1}-t^{-2m-1}}{t-t^{-1}}
    \Big|_{t=e^{\frac {i\pi n}{2p}}~.
      }
\end{equation}
In the sequel, we set $\O_p=\{1,3,\ldots,2p-1\}$ and $\O_p^+=\{1,3,\ldots,p\}$. Using Formula \eqref{FFT} for each sign $\epsilon_{j+1}=(-1)^{\lfloor q(j+1)/p\rfloor}$ and plugging it into the formula of Lemma \ref{triplepsilon}, we get 
\begin{equation}\nonumber\sigma_2({\textstyle{\frac q p}})\,\,
  =\sum_{(j,k,\ell)\in T_p} \epsilon_{j+1}\epsilon_{k+1}\epsilon_{\ell+1}
  =\frac1 {p^3} \sum_{(a,b,c)\in \O_p^3}\psi(a,b,c)\end{equation} 
where
\begin{equation}
\psi(a,b,c)=\!\!\!\!\sum\limits_{(j,k,l)\in T_p}\!\!\!\!
\frac{(x^{2q(j+1)+1}-x^{-2q(j+1)-1})(y^{2q(k+1)+1}-y^{-2q(k+1)-1})(z^{2q(\ell+1)+1}-z^{-2q(\ell+1)-1})}{(x-x^{-1})(y-y^{-1})(z-z^{-1})}\nonumber
\end{equation}
with $x=e^{\frac{i\pi a}{2p}}$,  $y=e^{\frac{i\pi b}{2p}}$ and $z=e^{\frac{i\pi c}{2p}}$.

The function $\psi$ is clearly symmetric in its three variables and satisfies also $\psi(2p-a,b,c)=\psi(a,b,c)$. We can hence rewrite $\sigma_2(\frac q p)$  as 
\begin{equation}\label{eq4n}\sigma_2({\textstyle{\frac q p}})=
  \frac1 {p^3}
  \sum_{a,b,c\in \O_p^+} 2^{m_a+m_b+m_c}\psi(a,b,c)\end{equation}
where $m_a=1$ if $a\ne p$ and $m_p=0$.

In order to compute $\psi(a,b,c)$, we rewrite it as follows:
\begin{equation} \psi(a,b,c)= \sum\limits_{\epsilon,\mu,\nu=\pm 1}\epsilon\mu\nu \frac{ x^{\epsilon}y^{\mu}z^{\nu}F(x^{2q\epsilon},y^{2q\mu},z^{2q\nu})}{(x-x^{-1})(y-y^{-1})(z-z^{-1})}\label{eq5n}
\end{equation} where $F$ is the polynomial defined by $F(X,Y,Z)=\sum_{(j,k,l)\in T_p}X^{j+1}Y^{k+1}Z^{\ell +1}$.

The computation of $F(x^{2q\epsilon},y^{2q\mu},z^{2q\nu})$ is simplified by the following observation:  we have $$F(X,Y,Z)=X^pY^p F(X^{-1}, Y^{-1},Z)~,$$ since the involution $(j,k,\ell ) \mapsto (p-2-j,p-2-k,\ell)$ preserves the set $T_p$. Therefore, since $F$ is symmetric in its three variables,  whenever $F(X,Y,Z)$ is evaluated at numbers $X=x^{2q}$, $Y=y^{2q}$, and $Z=z^{2q}$
satisfying $X^p=Y^p=Z^p=-1$, then for  any $\epsilon,\mu,\nu=\pm 1$, we have that $F(X^\epsilon, Y^\mu,Z^\nu)$ is equal to either $F(X,Y,Z)$ (when $\epsilon\mu\nu =1$) or to  $F(X^{-1}, Y^{-1},Z^{-1})= \overline {F(X,Y,Z)}$ (when $\epsilon\mu\nu =-1$).
\begin{Remark}\label{rk33}{\em For later use, we note the following special case: when $F(x^{2q},y^{2q},z^{2q})$ happens to be real, then $F(x^{2q\epsilon},y^{2q\mu},z^{2q\nu})$ 
    does not depend at all on $\epsilon,\mu$, or $\nu$, and formula \eqref{eq5n} for $\psi(a,b,c)$ simply becomes $\psi(a,b,c)=F(x^{2q},y^{2q},z^{2q})$.}
    \end{Remark}

We are now ready to compute $\psi(a,b,c)$. We will need to consider different cases depending on the number of distinct values taken by $a,b,c$.

\vspace{8pt}
{\bf I. The case when $a,b,c\in O_p^+$ are pairwise distinct.} We claim that  $\psi(a,b,c)=0$ in this case. To see this, it suffices to show that $F(x^{2q},y^{2q},z^{2q})=0$, where $x=e^{\frac{i\pi a}{2p}}$,  $y=e^{\frac{i\pi b}{2p}}$ and $z=e^{\frac{i\pi c}{2p}}$ are as above. We prove this  by evaluating  $F(x^{2q},y^{2q},z^{2q})$ using Brion's formula \cite{Br} (see \cite{Brion} for an elementary treatment), as follows.
 We can view $T_p$ as the intersection $\Lambda\cap \Delta$ where $\Delta$ is a simplex whose vertices are $(0,0,0)$, $(p-2,p-2,0)$, $(p-2,0,p-2)$ and $(0,p-2,p-2)$ and $\Lambda$ is the lattice of triples in $\Z^3$ with even sum. As $\Lambda$ is abstractly isomorphic to $\Z^3$, we can apply  Brion's formula  to write the polynomial $F$ as   $ F(X,Y,Z)=XYZ \, {\mathcal R}_p(X,Y,Z)$ where  ${\mathcal R}_p(X,Y,Z)$ is the following sum of four rational functions corresponding to the four vertices of $\Delta$:
\begin{multline*} {\mathcal R}_p(X,Y,Z)=\frac{1}{(1-XY)(1-XZ)(1-YZ)}+\frac{X^pY^p}{(XY-1)(X-Z)(Y-Z)}\\
+\frac{Y^pZ^p}{(YZ-1)(Y-X)(Z-X)}+\frac{X^pZ^p}{(XZ-1)(X-Y)(Z-Y)}
\end{multline*}
When $a,b,c\in O_p^+$ are pairwise distinct, we can use this rational function ${\mathcal R}_p$ to compute $F(x^{2q},y^{2q},z^{2q})$, as then all denominators are non-zero:  the numbers  $X=x^{2q}$, $Y=y^{2q}$, and $Z=z^{2q} $ are pairwise distinct and satisfy $XY\ne 1,XZ\ne 1,YZ\ne 1$. Furthermore, since these numbers satisfy the relations $X^p=Y^p=Z^p=-1$, we may also use the rational function ${\mathcal R}$ obtained from ${\mathcal R}_p$ by setting the three numerators $X^pY^p$, $Y^pZ^p$, $X^pZ^p$, equal to $1$. (Note that $p$ no longer appears in ${\mathcal R}$, justifying the notation.)  Writing now ${\mathcal R}$ with a common denominator, one finds by a tedious but straightforward computation that ${\mathcal R}=0$ as a rational function. Thus $F(x^{2q},y^{2q},z^{2q}) = x^{2q}y^{2q}z^{2q} \, {\mathcal R}(x^{2q},y^{2q},z^{2q})=0$, as asserted.

\vspace{8pt}

{\bf II. The case when $a,b,c\in O_p^+$ take precisely two distinct values.} We claim that the terms $\psi(a,b,c)$ where $\#\{a,b,c\}=2$, when taken all together, contribute zero to $\sigma_2(\frac q p)$. In view of \eqref{eq4n}, this will follow from the following two identities:
\begin{align}
  \psi(a,a,c)&= - \psi(c,c,a) &\mathrm{if}& \ a,c \ne p\  \mathrm{and}\ a\ne c \label{eq6n}\\
\psi(p,p,c)&= - 2 \psi(c,c,p) &\mathrm{if}&\ p\ne c
\label{eq7n}
\end{align}

To prove these identities, let us compute $\psi(a,a,c)$ for $a\ne c$. As before, we set $x=e^{\frac{i\pi a}{2p}}$, $z=e^{\frac{i\pi c}{2p}}$. We need to evaluate $F(X,X,Z)$ at $X=x^{2q}$, $Z=z^{2q}$. A direct computation gives
\begin{equation}\label{eq8n}
  F(X,X,Z)= \sum\limits_{(j,k,l)\in T_p}X^{j+k+2}Z^{\ell+1}= \sum_{\ell=1}^{p-1} \ell Z^l (X^{\ell +1} +  X^{\ell +3} + \ldots + X^{2p-1-\ell})~.
  \end{equation}
If $a\ne p$, then $X^2\ne 1$ and we can sum the terms in $X$ to get a rational function. The computation is tedious and the answer somewhat complicated. But after some manipulation using $X^p=Z^p=-1$, $X\ne Z$ and $XZ\ne 1$, we get
\begin{equation}\label{eq9n}
  F(X,X,Z)= \frac{pXZ}{(1-XZ)(X-Z)}~.
  \end{equation}
  We take two things from this: first, that $F(X^{-1},X^{-1},Z^{-1})= F(X,X,Z)$ (in other words, $F(X,X,Z)$ is real),  so that we have
  $\psi(a,a,c)=F(X,X,Z)$
  (see Remark~\ref{rk33}.)  And second, that $F(X,X,Z)=-F(Z,Z,X)$ so that $\psi(a,a,c)= - \psi(c,c,a)$. This proves the identity \eqref{eq6n}.

  If $a=p$, we need to proceed differently as now $X=-1$, so that Eq.~\eqref{eq8n} becomes
  $$F(-1,-1,Z)= - \sum_{\ell=1}^{p-1} \ell (p-\ell) (-Z)^\ell~. $$ But since $c\ne a=p$, we have $Z\ne -1$ so that we can now express this as a rational function in $Z$. Again after some manipulation using $Z^p=-1$, we get
  $$F(-1,-1,Z)=\frac {2pZ}{(1+Z)^2}~.$$ Again, this is real, so that we have
  $\psi(p,p,c)=F(-1,-1,Z)$.
  On the other hand, we can compute $\psi(c,c,p)$ using Eq.~\eqref{eq9n} to get
  $$ \psi(c,c,p)=F(Z,Z,-1) =\frac {-pZ} {(1+Z)^2}~.$$

  Thus $\psi(p,p,c)= - 2\psi(c,c,p)$, proving the identity \eqref{eq7n}. This concludes the discussion of Case II.

 We are now left with the equality
 \begin{equation}\label{factor8}\sigma_2({\textstyle\frac q p}) =
   \frac1 {p^3}
   \Bigl(\psi(p,p,p) + 2^3 \sum_{n\in \O_p^+, n\ne p}\psi(n,n,n)\Bigr).\end{equation}

{\bf III. The case when $a=b=c\in\O_p^+$.} We claim that  $\psi(p,p,p)$ gives rise to the first term in Theorem~\ref{sigma2}, while $\psi(n,n,n)$ for $n\ne p$ gives rise to the summand involving $f(n,p,q)$. The proof goes as follows. As before, we set $x=e^{\frac{i\pi n}{2p}}$ and  $X=x^{2q}$. A direct count gives 
\begin{equation}\label{eq10n}
  F(X,X,X)=\sum\limits_{(j,k,l)\in T_p}X^{j+k+l+3}=\sum\limits_{\ell=0}^{p-2}\frac{(\ell+1)(\ell+2)}{2}X^{2\ell+3}~.
\end{equation}
If $n=p$ then $X=-1$ and $F(-1,-1,-1)=-\binom{p+1}{3}$, hence  $$\textstyle{\frac 1 {p^3} \psi(p,p,p)=\frac 1 {p^3} F(-1,-1,-1)=  \frac{1-p^2}{6p^2}}~,$$ as asserted.

Let us now assume $n\ne p$. Then $X^2\ne 1$ and we can sum the R.H.S. of \eqref{eq10n} to get a rational function in $X$. Again the we skip the tedious computation. The result is
\begin{eqnarray*} F(X,X,X)&=&\frac{X^{2p}((p^2-p) X^2+2-2p^2+(p^2+p)X^{-2}))-2}{2(X-X^{-1})^3}\\
  &=& \frac {p\, \varphi(X)} {2(X-X^{-1})^2} ~,
\end{eqnarray*}
where $\varphi(X)=(p-1)X-(p+1)X^{-1}$. (We have used $X^p=-1$ in the last step.)

Note that  $\varphi(X)\ne\varphi(X^{-1})$ so that this time $F(X,X,X)$ is not real. Eq.~\eqref{eq5n} gives 
\begin{align}\nonumber
  \psi(n,n,n)&=\frac {(x^3+3x^{-1})F(X,X,X) -
    (3x+x^{-3})F(X^{-1},X^{-1},X^{-1})}{(x-x^{-1})^3}\\
             &=\frac {(x^3+3x^{-1})p\,\varphi(X) - (3x+x^{-3}) p \,\varphi(X^{-1})}{2 (x-x^{-1})^3 (X-X^{-1})^2}\label{psi-nnn}
\end{align}

Remembering now that $X=x^{2q}$, this becomes
$\psi(n,n,n)=pN/D$ where the denominator is 
$$D=2 (x-x^{-1})^3 (x^{2q}-x^{-2q})^2= {2(2i)^5\sin^3(\frac{n\pi}{2p})\sin^2(\frac{qn\pi}{p})}$$ and the numerator is
$pN$ with 
\begin{multline*} N=(p-1)(x^{2q+3}-x^{-2q-3})+(3p-3)(x^{2q-1}-x^{-2q-1})\\
  +(3p+3)(x^{2q+1}-x^{-2q-1})+(p+1)(x^{2q-3}-x^{-2q+3})
  = 2if(n,p,q)
\end{multline*}
where $f(n,p,q)$ is as defined in Theorem~\ref{sigma2}. Plugging this into Eq.~\eqref{factor8} proves the theorem.
\end{proof}

  \begin{Remark}{\em Murakami's rewriting \eqref{f2sigma2} of our trigonometric formula can be obtained directly from the above by writing  $\varphi(X) =  -(X+X^{-1}) + p(X-X^{-1})$ and rewriting the expression \eqref{psi-nnn}  as follows:
      \begin{align*}
        \psi(n,n,n)&=\frac { -(x-x^{-1})^3 p(X+X^{-1}) + (x+x^{-1})^3 p^2 (X-X^{-1})}{2 (x-x^{-1})^3 (X-X^{-1})^2}\\
        &= -\frac p 2  \, \frac {X+X^{-1}} {(X-X^{-1})^2}  + \frac {p^2} 2 \, \frac {(x+x^{-1})^3} {(x-x^{-1})^3} \, \frac 1 {X-X^{-1}}  ~.
      \end{align*}
      Remembering that $x=e^{\frac{i\pi n}{2p}}$ and  $X=x^{2q}$ and plugging this  into Eq.~\eqref{factor8} proves Formula \eqref{f2sigma2}.
      }\end{Remark}

\section{Asymptotic behavior of the signature in genus $2$}\label{sec2}

Here is the main result of this paper.
\begin{Theorem}\label{asympto}
  For almost all irrational $\theta \in [0,1]$, 
  if we write
  $q_k/p_k$
  for the convergents of the regular continued fraction expansion of $\theta$, we have :
  $$ \lim_{k\rightarrow \infty}\frac{1}{p_k^2}
  \sigma_2\Bigl(\frac{q_k}{p_k}\Bigr)
  =\frac{16}{\pi^3}\sum_{n\ge 1, \mathrm{odd}}\frac{1}{n^3\sin(n\pi\theta)}.$$
  \end{Theorem}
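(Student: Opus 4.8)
My plan is to start from the trigonometric formula of Theorem~\ref{sigma2}, extract its dominant part, and then analyse the resulting sum along the convergents. Writing $\beta=\tfrac{n\pi}{2p}$ and $\alpha=\tfrac{qn\pi}{p}$, the four summands of $f(n,p,q)$ regroup according to the power of $p$, and the triple–angle identities $3\cos\beta+\cos 3\beta=4\cos^3\beta$ and $3\sin\beta-\sin 3\beta=4\sin^3\beta$ collapse them to
$$f(n,p,q)=8p\,\sin\alpha\,\cos^3\beta+8\,\cos\alpha\,\sin^3\beta .$$
Substituting into Theorem~\ref{sigma2} and dividing by $p^2$ gives
$$\frac{\sigma_2(q/p)}{p^2}=\frac{1-p^2}{6p^4}+\sum_{n=1,\,\mathrm{odd}}^{p-2}\Bigl(\frac{2\,\cot^3(\tfrac{n\pi}{2p})}{p^3\,\sin(\tfrac{qn\pi}{p})}+\frac{2\,\cos(\tfrac{qn\pi}{p})}{p^4\,\sin^2(\tfrac{qn\pi}{p})}\Bigr).$$
The constant term is $O(p^{-2})$. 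For the last sum I would use that $n\mapsto qn\bmod p$ permutes $\{1,\dots,p-1\}$ together with $|\sin\pi x|\ge 2\|x\|$, where $\|x\|$ is the distance to $\Z$: this bounds its absolute value by $\tfrac1{2p^4}\sum_{m=1}^{p-1}\|m/p\|^{-2}=O(p^{-2})$. So the theorem reduces to showing that, along the convergents $q_k/p_k$, the sum $S_p:=\sum_{n\ \mathrm{odd}\le p-2}\tfrac{2\cot^3(n\pi/2p)}{p^3\sin(qn\pi/p)}$ tends to $\Lambda(\theta):=\tfrac{16}{\pi^3}\sum_{n\ge1,\,\mathrm{odd}}\tfrac{1}{n^3\sin(n\pi\theta)}$.

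Next I would argue by truncation. For a fixed cut-off $N$ there are finitely many terms with $n\le N$, and as $k\to\infty$ one has $\tfrac{2}{p_k^3}\cot^3(\tfrac{n\pi}{2p_k})\to\tfrac{16}{\pi^3 n^3}$ while $\sin(\tfrac{q_kn\pi}{p_k})\to\sin(n\pi\theta)\neq0$ (since $\theta$ is irrational), so the head of $S_{p_k}$ converges to $\tfrac{16}{\pi^3}\sum_{n\le N,\ \mathrm{odd}}\tfrac{1}{n^3\sin(n\pi\theta)}$. For almost every $\theta$ the series defining $\Lambda(\theta)$ converges absolutely — by Khinchin's metric theorem $\|n\theta\|\ge c_\theta/(n\log^2 n)$ for all but finitely many $n$, hence $1/(n^3|\sin(n\pi\theta)|)=O(\log^2 n/n^2)$ — so these heads tend to $\Lambda(\theta)$ as $N\to\infty$. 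Using $|\cot(\tfrac{n\pi}{2p})|\le p/n$ and again $|\sin\pi x|\ge2\|x\|$ gives $|\mathrm{term}_n|\le 1/(n^3\|q_kn/p_k\|)$, so it remains to prove the uniform tail bound
$$\lim_{N\to\infty}\ \limsup_{k\to\infty}\ \sum_{n=N+1}^{p_k-2}\frac{1}{n^3\,\|q_kn/p_k\|}=0 .$$

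This estimate is the technical heart of the argument and the only place where the arithmetic of $\theta$ matters. The key point is that the regular continued fraction of the convergent $q_k/p_k$ is the truncation $[0;a_1,\dots,a_k]$ of that of $\theta$, so its best–approximation denominators are exactly $p_0,\dots,p_k$. By the three–distance theorem (where a variant of Zaremba's technique enters) one obtains, for $1\le n<p_{j+1}$, the repulsion estimate $\|q_kn/p_k\|\ge\|p_j q_k/p_k\|\asymp 1/p_{j+1}$, and then by a dyadic counting argument $\sum_{n=1}^{p_{j+1}-1}\|q_kn/p_k\|^{-1}\ll p_{j+1}\log p_{j+1}$. Splitting $(N,p_k]$ into the blocks $[p_j,p_{j+1})$ with $p_j\ge N$, bounding $1/n^3\le 1/p_j^3$, and using $p_{j+1}\le(a_{j+1}+1)p_j$, the tail is $\ll\sum_{j\ge j_0(N)}(a_{j+1}+1)(\log p_j+\log a_{j+1})\,p_j^{-2}$ with $j_0(N)\to\infty$, and a similar computation disposes of the leftover block $[N,p_{j_0})$. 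Crucially, every $a_{j+1}$ and $p_j$ occurring here depends on $\theta$ only (not on $k$), so the bound is uniform in $k$.

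Finally this series is a tail of a convergent $\theta$–dependent series for almost every $\theta$: $p_j$ grows at least geometrically (Fibonacci lower bound), whereas Khinchin–Borel–Bernstein gives $a_j=O(j(\log j)^2)$ eventually and $\log p_j=O(j\log j)$ for almost all $\theta$, so the general term decays faster than any power of the geometric ratio and the tail tends to $0$ as $N\to\infty$. Letting first $k\to\infty$, then $N\to\infty$, yields $S_{p_k}\to\Lambda(\theta)$, and together with the two $O(p_k^{-2})$ contributions this proves $\sigma_2(q_k/p_k)/p_k^2\to\Lambda(\theta)$ for almost every irrational $\theta$. I expect the genuine difficulties to be (i) making the three–distance/Zaremba bound for the weighted sum $\sum 1/(n^3\|q_kn/p_k\|)$ uniform in $k$ with fully explicit constants, and (ii) isolating the precise full–measure set of $\theta$ on which all the Khinchin-type estimates used hold simultaneously.
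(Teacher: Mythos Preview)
Your proposal is correct, and the first half is in fact cleaner than the paper's argument. The paper treats each of the four summands of $f(n,p,q)$ separately, applies the addition formula $\sin(\alpha\pm\beta)=\sin\alpha\cos\beta\pm\cos\alpha\sin\beta$ to each, and is left with four ``error'' sums of the shape $S^{(2)}=p_k^{-3}\sum\cos\alpha/(\sin^2\beta\sin^2\alpha)$. Because of the $\sin^{-2}\beta$ factor, these are not obviously small, and the paper spends Lemmas~\ref{3.4}, \ref{3.9}, \ref{3.10} and the Zaremba-type Lemma~\ref{zaremba-trick}, together with the $N(k)=\lfloor 3k/4\rfloor$ cutoff and the Khinchin--L{\'e}vy theorem on $p_k^{1/k}$, to show $S^{(2)}\to 0$. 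Your triple-angle regrouping $f=8p\sin\alpha\cos^3\beta+8\cos\alpha\sin^3\beta$ is exactly the observation that the four $\cos\alpha$-pieces combine to $8\cos\alpha\sin^3\beta$, cancelling the $\sin^3\beta$ in the denominator; the resulting secondary term $2p^{-4}\sum\cos\alpha/\sin^2\alpha$ is then trivially $O(p^{-2})$ by your permutation bound. This bypasses the entire $S^{(2)}$ analysis and in particular the need for Khinchin--L{\'e}vy.

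For the main term, the paper proceeds differently: it replaces $\sin(q_kn\pi/p_k)$ by $\sin(n\pi\theta)$, applies dominated convergence to that modified sum, and bounds the difference directly using $|\theta-q_k/p_k|<p_k^{-2}$ (Lemma~\ref{3.3}). Your fixed-$N$ truncation with a uniform-in-$k$ tail bound is a different organization, but the block decomposition $[p_j,p_{j+1})$ together with Prop.~\ref{qpfacts}(v) and Khinchin's bound $a_j=O(j^2)$ that you invoke are exactly the ingredients of the paper's Lemma~\ref{3.9}. Your observation that the $p_j$ and $a_j$ entering the tail bound depend only on $\theta$ (because $q_k/p_k=[0;a_1,\dots,a_k]$ has the same initial convergents) is the key point for uniformity, and it is correct. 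One caution: the estimate $\sum_{n<p_{j+1}}\|q_kn/p_k\|^{-1}\ll p_{j+1}\log p_{j+1}$ that you assert via three-distance is not proved in your sketch and needs care; however, the cruder block-wise bound (using only $\|n\alpha_k\|\ge 1/(p_i+p_{i+1})$ for $p_i\le n<p_{i+1}$, as in Lemma~\ref{zaremba-trick}) already gives a tail $\ll\sum_j a_{j+1}^2/p_j$, which converges under the same Khinchin hypothesis and the Fibonacci lower bound on $p_j$, so the sharper estimate is not actually needed.
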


We remark that for almost all irrational $\theta$, the convergents $\frac {q_k}{p_k}$ have the property that  both $q_k$ and
$p_k$ are odd for infinitely many $k$ (see Remark~\ref{rem36}).
But in Theorem~\ref{asympto}, we don't
need to restrict to the convergents where both $q_k$ and
$p_k$ are odd.  We can take $\sigma_2(\frac q
p)$ to be the function defined by the expression in
Corollary~\ref{Cor-M}, so that $\sigma_2(\frac{q_k}{p_k})$ makes sense
also when one of $q_k$ or $p_k$ is even.
Our proof will show that with this
interpretation of $\sigma_2(\frac q
p)$ the theorem holds without parity restrictions on the convergents. 

We do not know the answer to the following question:
\begin{Question} Does Theorem~\ref{asympto} hold if the convergents  $\frac {q_k}{p_k}$ are replaced with an arbitrary sequence of rational numbers $\frac r s$ converging to $\theta$ ? Say with both $r$ and $s$ odd, so that  $\sigma_2(\frac r s)$ is indeed the
signature of a TQFT vector space in genus two?
\end{Question}

  Before proving Theorem~\ref{asympto}, let us discuss the convergence of the infinite sum on the right hand side. We recall that for any irrational number $\theta$, its irrationality exponent $\mu(\theta)$ is the supremum of  the set of $\mu\in \R$ such that the inequality 
$$0<|\theta-\frac{q}{p}|<\frac{1}{p^\mu}$$
is satisfied by an infinite number of coprime integer pairs $(p,q)$ with
$p>0$.
One always has  $\mu(\theta)\geq 2$, as follows from the continued fraction expansion (see Proposition~\ref{qpfacts}(iv) below.) Moreover it is well-known that almost all irrational numbers $\theta$ are {\em diophantine}, that is, have irrationality exponent $\mu(\theta)$ precisely equal to $2$.  In order to prove Theorem~\ref{asympto}, it will be sufficient to assume that $\mu(\theta)<3$. Assuming this from now on, take $0<\epsilon <3-\mu(\theta)$, then the inequality $|\theta-\frac{q}{p}|\ge {p^{-3+\epsilon}}$ holds for all but a finite number of pairs $(p,q)$. Observe that we do not need to impose the condition that $p$ and $q$ are coprime anymore.

Let us see how this implies convergence of the sum on the R.H.S. of Theorem~\ref{asympto}. 
Given $n\in \N$, one derives a
lower bound for
$\sin(\pi n\theta)$ in the following way. First, observe the inequality 
\begin{equation}\label{sin-ineq} |\sin (\pi x)| \geq 2 |x| \ \ \ \ \mathrm  {for \ \ all } \ \ |x| \leq \textstyle{\frac 1 2}~. 
\end{equation}  Now take $\ell\in \Z$ such that $|n\theta-\ell|\le\frac{1}{2}$: for such an $\ell$ one has $|\sin(n\pi\theta)|=|\sin(\pi(n\theta-\ell))|\ge 2n|\theta-\frac \ell n|$ by (\ref{sin-ineq}) and hence
\begin{equation}\label{sin2} |\sin(n\pi\theta)| \ge 2 {n^{-2+\epsilon}}
\end{equation}
for all but a finite number of $n$'s.
Plugging this inequality into the general term of the series on the R.H.S. of Theorem~\ref{asympto}  ensures its convergence.

We briefly review the properties of the convergents $q_k/p_k$ of $\theta$ that we will need. See {\em e.g.} Perron \cite{P} or Khinchin \cite{Kh1} for more details\footnote{Unfortunately, \cite{Kh1} writes $p_k$ for our  $q_k$ and {\em vice versa.}}. Each irrational $\theta\in {\mathbb R}$ has a unique regular continued fraction expansion $$\theta = [a_0; a_1, a_2, \ldots ] = a_0+\cfrac{1}{a_1+\cfrac{1}{a_2+\dots}} $$ where regular means that $a_0\in \Z$ and $a_i\in \Z_{\geq 1}$ for $i\geq 1$. Note that in our case $a_0=0$ since
$\theta \in [0,1]\setminus \Q$.
The convergents
$$\frac{q_k}{p_k} = [a_0; a_1, a_2, \ldots, a_k] =  a_0+\cfrac{1}{a_1+\cfrac{1}{\dots+ \frac{1}{a_k}}} $$ can also be defined recursively by
\begin{eqnarray} q_k&=a_k q_{k-1} + q_{k-2}, \ \ \ \ q_0=a_0,  \ \ \ \ q_{-1}=1 \label{rec5}\\
   p_k&=a_k p_{k-1} + p_{k-2}, \ \ \ \ p_0=1, \ \ \ \ p_{-1}=0 \label{rec6}
\end{eqnarray}
The following classical facts are well-known. 
\begin{Proposition}\label{qpfacts}
  For all irrational $\theta$, the convergents  $q_k/p_k$ of $\theta$ satisfy the following properties:
  \begin{enumerate} 
  \item[(i)] One has $1=p_0\leq p_1<p_2<\ldots$, {\em i.e.} the denominators $p_k$  are positive integers and moreover strictly increasing from $p_1$ onwards.
  \item[(ii)] For all $k\geq 1$, $q_k$ and $p_k$ are coprime.
  \item[(iii)] $\lim_{k\rightarrow\infty} q_k/p_k = \theta$.
  \item[(iv)]
    For all $k\geq 1$, we have
    \begin{equation}\nonumber 
      \Big|\theta           - \frac {q_k}{p_k} \Big|
      < \frac 1 {p_k^2}~.
      \end{equation}
    \end{enumerate}
  \end{Proposition}

      \begin{Remark}\label{rem36}{\em It is also known that for almost
all $\theta$, each positive integer occurs infinitely many times
(in fact, with positive
frequency, given by the Gauss-Kuzmin law)
as a coefficient $a_i$ of the regular continued
fraction expansion of $\theta$. In particular, infinitely many $a_i$
are odd. Using the recursion formulas (\ref{rec5}) and (\ref{rec6}),
it follows that for almost all $\theta$, both $q_k$ and $p_k$ are odd
for infinitely many $k$.  }\end{Remark}

\begin{proof}[Proof of Theorem~\ref{asympto}]
    We start from Murakami's simplification of our trigonometric formula for  $\sigma_2(\frac{q}{p})$ in Corollary~\ref{Cor-M}. Thus we must show
  \begin{equation}\label{eq4}
   \lim_{k\rightarrow \infty} 2 \sum_{n=1,\textrm{ odd}}^{p_k-2}\frac{\cos^3(\pi n/2p_k)}{p_k^3 \sin^3(\pi n/2p_k) \sin(\pi n q_k/p_k)}=\frac{16}{\pi^3}\sum_{n\ge 1, \textrm{odd}}\frac{1}{n^3\sin(n\pi\theta)}~.
    \end{equation}
Since $\lim {q_k}/{p_k}=\theta$ and
$\lim_{x\rightarrow 0}(x/ \sin x) =1$,   the limit as $k\rightarrow\infty$ of the $n$-th summand on the left of (\ref{eq4})
is precisely the $n$-th summand on the right. (Here we define the
$n$-th summand on the left to be zero if $n> p_k-2$.) Thus it suffices to show that when $\theta$ is irrational with $\mu(\theta)<3$, we may interchange the limit and the sum.

Let  $S_k$ denote the sum on the L.H.S. of \eqref{eq4} (before taking the limit as $k\rightarrow \infty$) and let $S^{(\theta)}_{k}$ the same sum but with $\sin(n\pi\theta)$ in the denominator in place of $\sin({q_kn\pi}/{p_k})$. Using (\ref{sin-ineq}), we see that the $n$-th summand of $S^{(\theta)}_{k}$ is bounded from above by $16/(n^3\sin(n\pi\theta))$ which, as we have seen, for $\mu( \theta)<3$  is the general term of an absolutely  convergent sum. Thus for $S^{(\theta)}_k$ we may interchange the limit and the sum, and we get
$$\lim_{k\rightarrow \infty} S^{(\theta)}_k= \frac{16}{\pi^3}\sum_{n\ge 1, \textrm{odd}}\frac{1}{n^3\sin(n\pi\theta)}~.$$
It remains to see that the difference $d_k=S_k-S^{(\theta)}_{k}$ goes to zero as $k\rightarrow \infty$. We proceed as follows. Note first that (\ref{sin2}) implies that for every
$0<\epsilon <3-\mu(\theta)$ there is $C_\epsilon>0$ so that
\begin{equation}\label{sinn-inequ} |\sin(n\pi\theta)| \ge C_\epsilon
{n^{-2+\epsilon}}
\end{equation} holds for all $n\geq 1$. We have 
      $$d_k= \  2  \sum_{n=1,\textrm{ odd}}^{p_k-2}\frac{\cos^3 (n\pi/2p_k)}{p_k^3\sin^3({n\pi}/{2p_k}) }\Biggl (\frac {1} {\sin({q_kn\pi}/{p_k})} - \frac {1} {\sin(n\pi\theta)}\Biggr)~.$$ We can bound the denominators from below using (\ref{sin-ineq}), (\ref{sinn-inequ}), and the trivial lower bound $|\sin({q_kn\pi}/{p_k})| \geq \sin (\pi/p_k) \geq 2/p_k$. Thus for $0<\epsilon <3-\mu(\theta)$ we have
\begin{equation}\label{eq6}
        |d_k| \leq \ 2 \sum_{n=1,\textrm{ odd}}^{p_k-2} \frac 1 {n^3} \frac {p_k}2  \frac {n^{2-\epsilon}}{ C_\epsilon} \Big|\sin (n\pi\theta) - \sin (\frac{q_kn\pi}{p_k})\Big|~.
      \end{equation}
      Now the addition formula $\sin\alpha - \sin \beta = 2 \sin \frac {\alpha-\beta} 2 \cos \frac {\alpha+\beta} 2$  gives
      \begin{eqnarray}\Big|\sin (n\pi\theta) - \sin \Bigl(\frac{q_kn\pi}{p_k}\Bigr)\Big|
        &\leq &2 \Big|\sin \Bigl(\frac {n\pi} 2 \Bigl(\theta- \frac{q_k}{p_k}\Bigr)\Bigr)\Big| \nonumber \\
        &\leq &n\pi \Big|\theta- \frac{q_k}{p_k}\Big| \nonumber\\
        &< &\frac {n\pi}{p_k^2} = \frac { n^{\epsilon}  n^{1-\epsilon} \pi}{p_k^{1+\epsilon}p_k^{1-\epsilon}}\label{eq7}\\
      &\leq &  \frac {n^\epsilon  \pi}{p_k^{1+\epsilon}}\label{eq8}
      \end{eqnarray} where the inequality in (\ref{eq7})
      follows from Prop.~\ref{qpfacts}(iv),
and the inequality (\ref{eq8}) follows from
$n\leq p_k-2$.  Plugging this into (\ref{eq6}) gives $$|d_k| \leq
 \frac {\pi C_\epsilon^{-1}} {p_k^\epsilon}\sum_{n=1}^{p_k}
\frac{1}{n}\leq \frac {\pi}{C_\epsilon}\frac{1+\ln(p_k)}{p_k^{\epsilon}}.$$ 
Thus $|d_k|\rightarrow 0$ as $k\rightarrow \infty$. This concludes the proof of Theorem~\ref{asympto}.
\end{proof}

\section{The higher genus case}\label{sec5}
In this section, we briefly discuss how the asymptotic behaviour of the genus $2$ signature might generalize to higher genus. Numerical experiments (see below) indicate that
  $\sigma_{g,n}(\frac{q}{p};\lambda)$ might grow like $p^{\max\{g,2g-2\}}$ when $\frac q p$ goes to an irrational $\theta\in[0,1]$. So we ask the following question.
\begin{Question}\label{51}
 Given $g,n\in \Z_{\geq 0}$ and $n$-tuples of integers  $\lambda=(\lambda_1,\ldots,\lambda_n)\in  \Z^n_{\geq 0}$,
 is there a function $F_{g;\lambda}:
 [0,1]
 \to \R$ defined almost everywhere satisfying the following:  
  For almost all irrational $\theta\in[0,1]$,
  if we write $\frac{q_k}{p_k}$ for the convergents of $\theta$, one has 
$$ \lim_{k\to\infty}\frac{\sigma_{g,n}(\frac{q_k}{p_k};\lambda)}{p_k^{\max\{g,2g-2\}}}=F_{g;\lambda}(\theta)$$
\end{Question}
Theorem \ref{asympto} shows that the answer is yes for $g=2$ and $n=0$. Let us
compare this to the asymptotics of the dimension
of the TQFT vector spaces
which 
is well-known: it has a semi-classical interpretation for which we refer to \cite[Section 3]{Witten}. Precisely:
$$ \lim_{p\to\infty}\frac{\dim \boV_p(S_g)}{p^{\max\{g,3g-3\}}}=\operatorname{Vol}(\mathcal{M}_g)=2(2\pi^2)^{1-g}\zeta(2g-2)$$
where the last equality holds only for $g>0$ and $\mathcal{M}_g$ is the character variety of representations of $\pi_1(S_g)$ into SU$_2$, endowed with its Liouville measure. We have no conceptual explanation why the order of growth $3g-3$
for the dimension
should be replaced by $2g-2$
for the signature.

We now observe that the question is easily answered affirmatively for $g=0$. Indeed, take any irrational $\theta$ and set $\zeta=e^{i\pi \theta}$. We can define an infinite dimensional
``almost Frobenius''
algebra $V_\theta$ as follows: as a vector space $V_\theta=\bigoplus_{n\ge 0} \Q e_n$. The form $\eta$ is such that the basis $e_n$ is orthogonal and satisfies 
$$\eta(e_n,e_n)=(-1)^n \Sign [n+1]=(-1)^n (-1)^{\lfloor (n+1)\theta\rfloor}.$$

Finally, the trilinear form $\omega$ and the product are defined as in Section \ref{frobenius}, putting $p=\infty$. The algebra $V_\theta$ is still commutative and associative, but it does not give a $(1+1)$-TQFT because $\Omega$ does not converge. However, the formulas for the signature in genus $0$, which do not involve $\Omega$, still make sense and we have 
$$\lim_{\frac{q}{p}\to\theta}\sigma_{0,n}(\textstyle{\frac{q}{p}};\lambda_1,\ldots,\lambda_n)=\epsilon(e_{\lambda_1}\cdots e_{\lambda_n}).$$
Thus the function  $F_{0;\lambda_1, \ldots, \lambda_n}(\theta) = \epsilon(e_{\lambda_1}\cdots e_{\lambda_n})$  answers Question~\ref{51} in the genus zero case.\footnote{
      Note that $\epsilon(e_{\lambda_1}\cdots e_{\lambda_n})$ depends on $\theta$, because the multiplication in $V_\theta$ depends on $\theta$.}

The ``almost Frobenius'' algebra $V_\theta$ is a kind of signed version of the algebra of regular class functions on SU$_2$, where $e_n$ corresponds to the character of the \mbox{$(n+1)$}-dimensional irreducible representation of SU$_2$. It would be interesting to have a geometric interpretation of it. 

It is also natural to try to extend the results of the previous section to all genus $g\ge 3$. Unfortunately, the formula for the signature is not as nice as in Lemma \ref{triplepsilon} and we do not know how to proceed for computing its asymptotic behavior. A numerical experiment for $g=3$ shown in Figure~\ref{genre3} seems to be compatible with an affirmative answer to the question but it is less convincing than in genus $2$.  

\begin{figure}[htbp]
\begin{center}
\includegraphics[width=10cm]{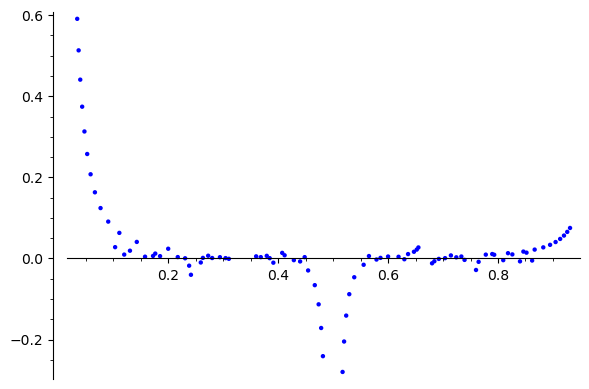}
\end{center}
\caption{The graph of the map
 $\frac{q}{p}\mapsto \sigma_3(\frac q p)/{p^4}$
  for $p\le 31$.}\label{genre3}
\end{figure}

As explained in Section \ref{frobenius}, the case $g=1$ and $n=0$ is not interesting as $\sigma_1(\frac{q}{p})=p-1$
for all $q$. However, interesting phenomena occur when adding marked points. For instance, given a positive integer $k$, it is not difficult to show the following formula: 
$$\sigma_1({\textstyle{\frac{q}{p}}};2k)=\sum_{n=k+1}^{p-1-k}\prod_{\ell=1}^k\epsilon_{n+\ell}\epsilon_{n-\ell}.$$
In Figure~\ref{fig3a} we show the graph of the maps $\frac{q}{p}\mapsto \frac{1}{p}\sigma_1(\frac{q}{p};2k)$ for $p\le 101$. 

\begin{figure}[htbp]
\begin{center}
\includegraphics[width=5cm]{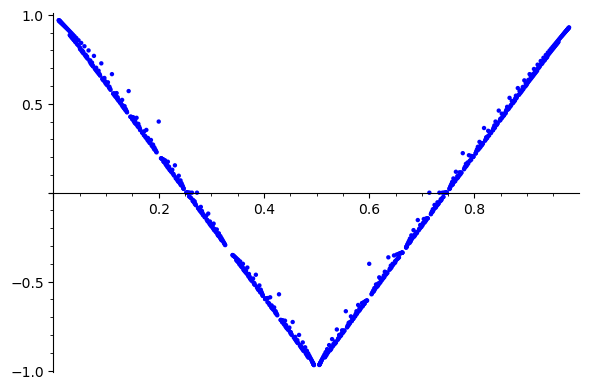}
\includegraphics[width=5cm]{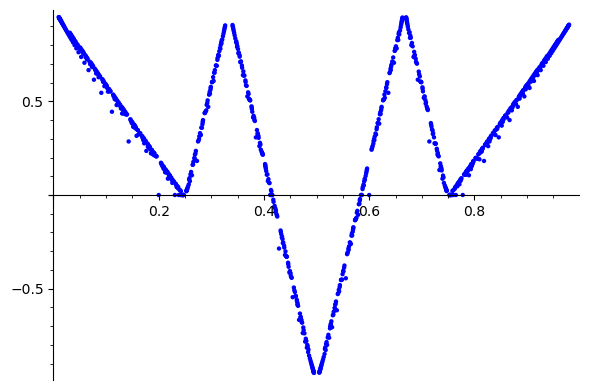}
\includegraphics[width=5cm]{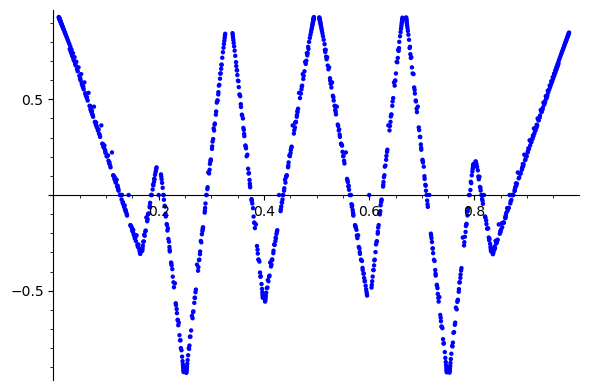}
\end{center}
\caption{Normalized signature of the punctured torus with colors
  $2k=2,4,6$.}
\label{fig3a}
\end{figure}

This case can probably be studied using the same techniques as in Sections \ref{genre2} and \ref{sec2} but we did not pursue in this direction.

\section{Modularity properties and transformation laws for the signature}

In this section, we discuss modular properties of the signature in two examples : the 4-punctured sphere and the closed surface of genus $2$.  
\subsection{A genus 0 example related to Dedekind sums}\label{SectionS}
Given $ 0<q<p$ coprime odd integers,
 we set
$\epsilon_n=(-1)^{\lfloor {nq}/{p}\rfloor}$
 as before, and define
 \begin{equation}\label{Sdef1}
   S\Bigl(\frac q p\Bigr)
   =\sum_{n=1,\, \rm{ odd}}^{p-1}\epsilon_n
  ~.\end{equation}

This integer has many interpretations in terms of signatures:
\begin{enumerate}
\item It is the signature of $\boV_p(S_{0,4}, \frac{p-1}{2},\frac{p-1}{2},\frac{p-1}{2},\frac{p-1}{2})$ at $\zeta=e^{i\pi q/p}$.
\item It is the signature of the bilinear form $\eta$ on the sub-algebra  of the Frobenius algebra $V_{\frac q p}$ generated by even colors. 
\item It is half the signature of the two-bridge knot $K(p,q)$, see \cite{S2B}.
\end{enumerate}

The second point is obvious, so let us explain the first one. Set $r=\frac{p-1}{2}$ and start from the formula 
$$
\sigma_0({\textstyle\frac q p}; r,r,r,r) =\epsilon(e_{r}^4)=\eta(e_r^2,e_r^2)$$
(see Theorem~\ref{DM1}.) The definition of the multiplication in $V_{\frac q p}$ gives  $e_r^2=\sum_{n} \frac{\omega(e_r,e_r,e_n)}{\eta(e_n,e_n)}e_n$.  The point now is that for $r=\frac{p-1}{2}$, one has that $(r,r,n)$ belongs to $T_p$ for all $n$ in $\{0,2,\ldots,p-3\}$. Hence we get $e_r^2=\sum_{n=0,\rm even}^{p-3}\pm e_n$. Since the $e_n$ are orthogonal, this implies $\eta(e_r^2,e_r^2)=\sum_{n=0,\rm even}^{p-3}\eta(e_n,e_n)$. The result now follows from the definition of $\eta$ (see Formula ~\eqref{etasign} in Section~\ref{frobenius}).
  
\vskip 8pt
 
  Let us now discuss modular properties of $S(\frac q p)$. For this it will be natural to allow $q$ to be negative\footnote{
    But notice that $S(\frac {-q} p) = - S(\frac q p)$ while the signature is unchanged when $q$ is replaced by $-q$, see Remark~\ref{23}. Thus our interpretation of $S(\frac q p)$ as a signature only holds for positive $q$.}  and $p$ (but not $q$) to be even. We redefine $S(\frac q p)$ for coprime integers $q,p$ with $p>0$ and $q$ odd, as follows: \begin{equation} \label{Sdef2}  S\Bigl(\frac q p\Bigr)=\frac 1 2 \sum_{n=1}^{p-1}\epsilon_n~,\end{equation} where $\epsilon_n=(-1)^{\lfloor {nq}/{p}\rfloor}$ as before. If $p$ is odd, this coincides with the original definition \eqref{Sdef1} because of the identity $\epsilon_{p-k}=\epsilon_k$.
 This identity also shows that when $p$ is even, $S(\frac q p)$ is not an integer, but a half-integer.

 \subsubsection{Relation to Dedekind sums}

Let us recall (one of) the definitions of the Dedekind sum, for coprime integers $p,q$ with $p>0$:
$$s(q,p)=\frac{1}{4p}\sum_{n=1}^{p-1}\cot\Bigl(\frac{\pi n}{p}\Bigr)\cot\Bigl(\frac{\pi n q}{p}\Bigr)~.$$

We give an explicit relation between the two sums, $S(\frac q p)$ and $s(q,p)$,
answering a question in \cite{S2B}.
In the terminology of number theorists, $S$ is a 2-smoothed version of $s$: 
\begin{Proposition}\label{dedekindtosign}
  For coprime integers $p,q$ with $p>0$ and $q$ odd,
    we have $$S({\textstyle\frac q p})=4s(q,2p)-2s(q,p).$$
  \end{Proposition}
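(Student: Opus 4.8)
The plan is to reduce everything to the elementary identity
$$\cot\Bigl(\frac{\pi m}{2p}\Bigr)-\cot\Bigl(\frac{\pi m}{p}\Bigr)=\frac{1}{\sin(\pi m/p)},$$
which is just a rewriting of $\cot(x/2)-\cot(x)=1/\sin(x)$. The point is that $4s(q,2p)-2s(q,p)$, when written out, naturally produces a sum involving $\cot(\pi n/2p)-\cot(\pi n/p)$ against $\cot(\pi n q/p)$, and this reciprocal-sine factor is exactly what is needed to turn a cotangent sum into a sum of signs $\epsilon_n$. So first I would write
$$4s(q,2p)-2s(q,p)=\frac 1 {2p}\sum_{m=1}^{2p-1}\cot\Bigl(\frac{\pi m}{2p}\Bigr)\cot\Bigl(\frac{\pi m q}{2p}\Bigr)-\frac 1{2p}\sum_{n=1}^{p-1}\cot\Bigl(\frac{\pi n}{p}\Bigr)\cot\Bigl(\frac{\pi n q}{p}\Bigr).$$
In the first sum, split the range $1\le m\le 2p-1$ into even indices $m=2n$ with $1\le n\le p-1$, and odd indices. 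The even-index part is $\frac 1{2p}\sum_{n=1}^{p-1}\cot(\pi n/p)\cot(\pi n q/p)$ (using $\cot(\pi n q/2p)$ with $m=2n$ gives $\cot(\pi n q/p)$), which exactly cancels the second sum. So $4s(q,2p)-2s(q,p)$ equals the odd-index part, namely $\frac 1{2p}\sum_{m\text{ odd}}\cot(\pi m/2p)\cot(\pi m q/2p)$.

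Next I would pair each odd $m$ in $\{1,3,\dots,2p-1\}$ with $2p-m$, which is also odd; using $\cot(\pi(2p-m)/2p)=-\cot(\pi m/2p)$ and similarly for the $q$-argument (here $q$ odd is needed so that $q(2p-m)$ and $qm$ are congruent mod $2p$ up to sign in the right way), the two terms are equal, so the odd-index sum over $\{1,\dots,2p-1\}$ is twice the sum over some half-range — but it is cleaner to instead directly relate $\cot(\pi m q/2p)$ to the sign $\epsilon_m$. The key computation: for odd $m$, writing $mq=2p\lfloor mq/2p\rfloor + s$ with $0<s<2p$ (and $s$ odd since $mq$ is odd and $2p$ even), one has $\cot(\pi m q/2p)=\cot(\pi s/2p)$ and $(-1)^{\lfloor mq/2p\rfloor}=\operatorname{sign}(\cot(\pi s/2p))\cdot(\text{sign of }\sin)$... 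Actually the more robust route is to use the finite Fourier / partial-fraction expansion
$$\cot\Bigl(\frac{\pi m q}{2p}\Bigr)=\text{(odd periodic function of }m q\text{ mod }2p)$$
together with the standard discrete identity expressing $\epsilon_m=(-1)^{\lfloor mq/p\rfloor}$ (note: original def uses $p$, but $\epsilon_{p-k}=\epsilon_k$ reconciles) as a cotangent sum. Concretely I expect the cleanest path is to invoke the known evaluation
$$\sum_{m=1,\text{ odd}}^{2p-1}\cot\Bigl(\frac{\pi m}{2p}\Bigr)\cot\Bigl(\frac{\pi m q}{2p}\Bigr)=2p\cdot\frac 1 2\sum_{n=1}^{p-1}\epsilon_n= p\, S\Bigl(\frac q p\Bigr),$$
proved by expanding one cotangent factor via its Fourier series on $\Z/2p\Z$ and recognizing that the resulting inner sum over odd $m$ collapses to $\sum \epsilon_n$.

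The main obstacle, and the step I would spend the most care on, is establishing that last displayed identity — i.e. showing directly that $\frac 1 {2p}\sum_{m\text{ odd},\,1\le m\le 2p-1}\cot(\pi m/2p)\cot(\pi m q/2p)=\frac 1 2\sum_{n=1}^{p-1}\epsilon_n$. I would do this by using the finite Fourier expansion of $t\mapsto\cot(\pi t/2p)$ restricted to odd integers $t$ (analogous to equation \eqref{FFT} in Section~\ref{genre2}, which handles $(-1)^{\lfloor m/p\rfloor}$ by an odd-index Fourier sum): write $\cot(\pi m q/2p)$ in terms of $\sum_{\text{odd }\ell}$ of roots of unity, substitute, swap the order of summation, and evaluate the inner geometric-type sum over odd $m\in\{1,\dots,2p-1\}$, which is a Dirichlet-kernel computation picking out the sawtooth $\lfloor mq/p\rfloor \bmod 2$. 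This is where one must be careful about parity of $p$ versus $2p$, the distinction between the two definitions \eqref{Sdef1} and \eqref{Sdef2} of $S$, and the half-integer phenomenon when $p$ is even; all the bookkeeping with odd ranges has to be done honestly. Once the identity is in hand, combining it with the cancellation from the first paragraph gives $4s(q,2p)-2s(q,p)=\frac 1 {2p}\cdot p\,S(\tfrac q p)\cdot\frac{1}{?}$ — I would double-check the normalization constant $\frac 1{4p}$ versus $\frac 1{2p}$ at the end to land exactly on $S(\tfrac q p)$, which is the kind of factor-of-two that this smoothed-Dedekind-sum identity is designed to absorb.
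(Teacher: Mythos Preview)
Your approach is essentially the paper's own: split $s(q,2p)$ into even- and odd-index parts so that the even part cancels $s(q,p)$, leaving $4s(q,2p)-2s(q,p)=\frac{1}{2p}\sum_{m\ \mathrm{odd}}^{2p-1}\cot(\pi m/2p)\cot(\pi mq/2p)$, and then identify this odd cotangent sum with $S(\frac q p)$ via the finite Fourier transform \eqref{FFT} (the paper does these two steps in the opposite order, starting from $S$ and arriving at the Dedekind sums). Two minor remarks: the identity $\cot(x/2)-\cot(x)=1/\sin(x)$ you announce at the outset is never actually used and can be dropped; and in your displayed identity you have a slip --- since $S(\frac q p)=\frac12\sum_{n=1}^{p-1}\epsilon_n$, the quantity $2p\cdot\frac12\sum\epsilon_n$ equals $2pS(\frac q p)$, not $pS(\frac q p)$, which is exactly the factor of two you need so that $\frac{1}{2p}\cdot 2pS=S$ with no leftover normalization to chase.
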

  \begin{Remark}{\em
Note that $q$ must be odd for $s(q,2p)$ to make sense.
      }\end{Remark}

\begin{proof}
  From the finite Fourier transform formula \eqref{FFT} we get
  $$S\Bigl(\frac q p\Bigr)= \frac{1}{2p}\sum_{n=1}^{p-1}\sum_{k=1,\rm{odd}}^{2p-1}
  \frac{\sin(\frac{\pi k (2nq+1)}{2p})}{\sin(\frac{\pi k}{2p})}.$$
  By exchanging the sums and computing the geometric sum over $n$, we find
  $$S\Bigl(\frac q p\Bigr)=
  \frac{1}{2p}  \sum_{k=1,\rm odd}^{2p-1}
  \cot\Bigl(\frac{\pi k}{2p}\Bigr)\cot\Bigl(\frac{\pi k q}{2p}\Bigr).$$
Let us now cut the sum
$s(q,2p)=\frac{1}{8p}\sum_{n=1}^{2p-1}\cot(\frac{\pi
  n}{2p})\cot(\frac{\pi n q}{2p})$
into two parts depending on the parity of $n$: we get
$$s(q,2p)=\frac{1}{8p}\sum_{n=1}^{p-1}\cot(\frac{\pi n}{p})\cot(\frac{\pi n q}{p})+\frac{1}{8p}\sum_{n=1,\rm odd}^{2p-1}\cot(\frac{\pi n}{2p})\cot(\frac{\pi n q}{2p})$$
This yields the equality
$s(q,2p)=\frac{1}{2}s(q,p)+\frac{1}{4}S(\frac q p)$ from which the
result follows.
\end{proof}

\subsubsection{A modular interpretation for $S(\frac q p)$}

The Dedekind sums are known to have a chaotic behavior so that we cannot hope for a result in the spirit of Theorem \ref{asympto}. However, it is known that we can write $S(\frac q p)$ as an integral of a modular form. We give the details in this section as we will observe that the limiting series of Theorem \ref{asympto} has a similar interpretation. We believe that this similarity hides a general structure that deserves to be discovered. 

\begin{Proposition}\label{Smodular}
  Let $\H$ be the upper half-plane and $\eta:\H\to \C$ the Dedekind eta function,  given for $\tau \in \H$ by $\eta(\tau)=e^{i\pi\tau/12}\prod_{n=1}^{\infty}(1-e^{2i\pi n\tau})$.
Set $g(\tau)= \eta(\tau)^2/\eta(2\tau)$. 
Then for coprime integers $p,q$ with $p>0$ and $q$ odd,
we have
  $$S\Bigl(\frac q p\Bigr)= \frac{2}{\pi}\int_{\frac{q}{2p}}^{ i \infty}
  d\arg g(\tau)=-\frac{2}{\pi}\lim_{t\newsmallsearrow 0}\arg g\Bigl(\frac{q}{2p}+it\Bigr).$$
\end{Proposition}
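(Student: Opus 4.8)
The plan is to establish the two asserted equalities by relating $S(\frac q p)$ to the argument (i.e.\ the imaginary part of the logarithm) of the modular form $g(\tau)=\eta(\tau)^2/\eta(2\tau)$ along the vertical ray emanating from $\frac{q}{2p}$. The second equality is purely formal: $\int_{q/2p}^{i\infty} d\arg g(\tau) = \lim_{t\to 0^+}\bigl(\arg g(i\infty) - \arg g(\frac{q}{2p}+it)\bigr)$, and since $g(\tau)\to 1$ as $\tau\to i\infty$ (because $\eta(\tau)^2/\eta(2\tau)= e^{i\pi\tau(2\cdot\frac{1}{12}-\frac{2}{12})}\prod(\cdots) = \prod_{n\ge1}(1-e^{2i\pi n\tau})^2/(1-e^{2i\pi n\cdot 2\tau})\to 1$, so the $q$-expansion has constant term $1$), $\arg g(i\infty)=0$. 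So the real content is the first equality.

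For the first equality I would compute $\frac{d}{dt}\arg g(\frac{q}{2p}+it) = \mathrm{Im}\,\frac{d}{dt}\log g(\frac{q}{2p}+it) = -\mathrm{Re}\,\frac{g'}{g}(\frac{q}{2p}+it)$, using that $\frac{d}{dt}=i\frac{d}{d\tau}$ along a vertical line. The logarithmic derivative of $g$ is a weight-$2$ object: $\frac{g'}{g}(\tau) = 2\frac{\eta'}{\eta}(\tau) - 2\frac{\eta'}{\eta}(2\tau)$, and $\frac{\eta'}{\eta}(\tau) = \frac{i\pi}{12}E_2(\tau)$ where $E_2(\tau)=1-24\sum_{n\ge1}\sigma_1(n)q^n$ is the (quasimodular) Eisenstein series. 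Hence $\frac{g'}{g}(\tau) = \frac{i\pi}{12}\bigl(E_2(\tau)-2E_2(2\tau)\bigr)$, which is $-\frac{i\pi}{12}$ times an \emph{honest} weight-$2$ modular form for $\Gamma_0(2)$ (the combination $2E_2(2\tau)-E_2(\tau)$). Expanding in the Lambert/$q$-series and integrating term by term over $t\in(0,\infty)$, the elementary integral $\int_0^\infty e^{-2\pi n t}\cos(2\pi n x)\,dt$ type contributions should reassemble, after summing the divisor-sum, into a cotangent sum: concretely one expects $\int_{q/2p}^{i\infty} d\arg g = \frac{1}{2p}\sum_{k\ \mathrm{odd}}\cot(\frac{\pi k}{2p})\cot(\frac{\pi kq}{2p})$ or an equivalent closed form. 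Comparing with the cotangent formula for $S(\frac q p)$ derived inside the proof of Proposition~\ref{dedekindtosign}, namely $S(\frac q p)=\frac{1}{2p}\sum_{k\ \mathrm{odd}}^{2p-1}\cot(\frac{\pi k}{2p})\cot(\frac{\pi kq}{2p})$, then gives the factor $\frac{2}{\pi}$ and finishes the identity.

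An alternative, cleaner route avoids explicit $q$-expansions: use the known transformation behaviour of $\log\eta$ under $\mathrm{SL}_2(\Z)$ (the Dedekind $\eta$-multiplier, governed by $s(q,p)$) to compute the total change of $\arg g$ along the path, then invoke Proposition~\ref{dedekindtosign} to rewrite the combination $4s(q,2p)-2s(q,p)$ that appears. In more detail: the ray from $\frac{q}{2p}$ to $i\infty$ can be moved by an element of $\Gamma_0(2)$ (or of the full modular group, tracking the extra contribution from $2\tau$) sending $\frac{q}{2p}$ to the cusp $i\infty$; the $\arg$ of $g$ picks up exactly the $\eta$-multiplier phases, which are expressed through Dedekind sums. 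Reassembling gives $\frac{2}{\pi}\int_{q/2p}^{i\infty} d\arg g = 4s(q,2p)-2s(q,p) = S(\frac q p)$ by Proposition~\ref{dedekindtosign}. One must be slightly careful that $g$ has no zeros or poles on $\H$ (true, since $\eta$ is nonvanishing on $\H$), so $\arg g$ is well defined by continuous continuation along the ray.

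The main obstacle, in either approach, is bookkeeping the branch of $\arg g$ and the precise constants: making sure the continuous determination of $\arg g$ along the vertical ray matches the normalization $\arg g(i\infty)=0$, and correctly handling the quasimodularity of $E_2$ (the non-holomorphic completion $E_2^*(\tau)=E_2(\tau)-\frac{3}{\pi\,\mathrm{Im}\,\tau}$ transforms with weight $2$, and the $\frac{3}{\pi\,\mathrm{Im}\,\tau}$ pieces cancel in the combination $E_2(\tau)-2E_2(2\tau)$ only up to a term one must track). Once the constant $\frac{2}{\pi}$ and the agreement with the cotangent sum for $S(\frac q p)$ are pinned down, the rest is routine. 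I expect the term-by-term integration approach to be the most self-contained for a non-specialist reader, matching the paper's stated aim.
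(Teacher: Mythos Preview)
Your handling of the second equality is fine and agrees with the paper. The real issue is your main route to the first equality. Integrating $-\mathrm{Re}\,\frac{g'}{g}(\frac{q}{2p}+it)$ term by term over $t\in(0,\infty)$ does \emph{not} produce a convergent expression: after expanding the Lambert series, each Fourier mode $e^{2\pi i m(x+it)}$ integrates to $\frac{e^{2\pi i m x}}{2\pi m}$, and re-summing the divisor sum gives something like $\sum_{d\,\mathrm{odd}}\frac{1}{d}\sum_{j\ge 1}e^{2\pi i d j x}$, which diverges. Equivalently, at $t=0$ the integrand behaves like $-2\pi\sum_{n}\frac{n}{\sin(2\pi n x)}$, so the singularity at $t=0$ is not termwise integrable. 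The passage ``should reassemble, after summing the divisor-sum, into a cotangent sum'' is exactly where the work lies, and it cannot be done by naive term-by-term integration; some regularization or modularity input is needed to collapse the infinite series to the \emph{finite} cotangent sum $\frac{1}{2p}\sum_{k\,\mathrm{odd}}^{2p-1}\cot(\frac{\pi k}{2p})\cot(\frac{\pi kq}{2p})$. (Minor side point: the non-holomorphic correction to $E_2$ actually cancels exactly in $E_2(\tau)-2E_2(2\tau)$, since $\mathrm{Im}(2\tau)=2\,\mathrm{Im}(\tau)$; there is nothing extra to track there.)

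Your alternative route via the $\eta$-multiplier and Proposition~\ref{dedekindtosign} is viable; the paper explicitly says the result ``can be proved from the expression of $S(\frac q p)$ as $2$-smoothed Dedekind sum \ldots\ by adapting arguments given in \cite{DD}.'' But the paper deliberately takes a different, elementary path that avoids both the derivative and the Dedekind machinery. It works directly with the product formula: writing $z=e^{2i\pi\tau}$, one has
\[
-\arg g(\tau)=\sum_{n\ge 1}\arg\frac{1+z^n}{1-z^n}=:F(z),
\]
using the principal argument termwise. Setting $\zeta=e^{i\pi q/p}$ and splitting $F=\sum_{k=0}^{p-1}F_k$ according to $n\bmod p$, one has $F_0(\zeta e^{-t})\equiv 0$ (those terms are real), and the paper proves $\lim_{t\searrow 0}F_k(\zeta e^{-t})=\frac{\pi}{4}\epsilon_k$ for $1\le k\le p-1$ by pairing consecutive terms, summing the resulting geometric/Lambert pieces in closed form, and justifying continuity at $t=0$ via an Abel-type ``integration by parts'' (dominated by $C/\ell^2$). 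Summing over $k$ gives $-\arg g(\frac{q}{2p}+0^+)=\frac{\pi}{4}\sum_{k=1}^{p-1}\epsilon_k=\frac{\pi}{2}S(\frac{q}{p})$. This avoids differentiating and re-integrating altogether, which is precisely where your first approach runs into the divergence problem.
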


Here the integration goes along the vertical line in $\H$ of real part $q/2p$, and $\arg g(\tau)$ stands for the function on $\H$ defined by setting  $\arg g(i\infty)= \arg 1 =0 $ and extending this to $\tau\in\H$ by continuity along any path from $i\infty$ to $\tau$.

\begin{proof}  
 This can be proved from the expression of $S(\frac q p)$ as  $2$-smoothed Dedekind sum as in Proposition \ref{dedekindtosign}, by adapting arguments given in \cite[Section 2.5]{DD}. Let us sketch for the benefit of the reader how the proposition can be proved by elementary means starting directly from the definition of $S(\frac q p)$ as the half-sum of signs $\epsilon_n$. We first set for $z=e^{2i\pi \tau} :$
  \begin{equation}\label{eq25} 
    F(z)=\sum_{n\ge 1} \arg\frac{1+z^n}{1-z^n}=\arg \prod_{n\ge 1}\frac{1+z^n}{1-z^n}=\arg \frac{\eta(2\tau)}{\eta(\tau)^2} = -\arg g(\tau)~.
    \end{equation}
Here, the first two occurrences of $\arg$ in (\ref{eq25}) are the principal determination of the argument for which each term in the sum belongs to
$]-\frac{\pi}{2},\frac{\pi}{2}[$, since the map $z\mapsto w=(1+z)/(1-z)$ sends the open disk $|z|<1$ to the half-plane $\mathrm{Re}(w)>0$. For later use we remark that this map sends $z=-1$ to $w=0$ and the upper ({\em resp.} lower) half of the circle $|z|=1$ to the upper ({\em resp.} lower) half of the imaginary axis, so that for $\zeta=e^{\pi i q/p}$ and $n\geq 1$ not a multiple of $p$, we have
\begin{equation} \label{eq26n}
  \arg \frac{1+\zeta^n}{1-\zeta^n}= \frac{\pi}{2}(-1)^{
 \lfloor {nq}/{p}\rfloor}
= \frac{\pi}{2}\epsilon_n~.
\end{equation}

Next, let us write $F=\sum_{k=0}^{p-1}F_k$ where
$$ F_k(z)=\sum_{n\geq 1, \ n  \equiv k \hspace {-7pt}\pmod p} \arg \frac{1+z^n}{1-z^n}.$$ Since $\zeta^p=-1$ is real, $F_0(\zeta e^{-t})$ vanishes for every $t>0$.  So its limit as $t\newsearrow 0$ is zero as well. Hence the proposition follows from \eqref{eq25} and the following lemma.
\end{proof}
\begin{Lemma}\label{lem63} For $k\in \{1,\ldots,p-1\}$ we have
  \begin{equation}\label{eq28n} \lim_{t\newsmallsearrow 0}F_k(\zeta e^{-t})=\frac \pi 4 (-1)^{
\lfloor{kq}/{p}\rfloor}= \frac{\pi}{4}\epsilon_k~.
\end{equation}
\end{Lemma}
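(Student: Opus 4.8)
The plan is to compute $\lim_{t\newsmallsearrow 0}F_k(\zeta e^{-t})$ directly by summing the arguments of the individual factors and using the limiting behavior recorded in~\eqref{eq26n}. Write $z=\zeta e^{-t}$ with $t>0$, so $z\to \zeta=e^{\pi i q/p}$ as $t\newsmallsearrow 0$. The terms occurring in $F_k(z)$ are indexed by $n=k, k+p, k+2p, \ldots$, and for each such $n$ we have $\arg\frac{1+z^n}{1-z^n}=\arg\frac{1+\zeta^n e^{-nt}}{1-\zeta^n e^{-nt}}$. For a fixed $n$ not a multiple of $p$, the number $\zeta^n$ lies strictly inside neither $\{1\}$ nor $\{-1\}$, so $\frac{1+z^n}{1-z^n}$ stays in a compact subset of the open right half-plane as $t\newsmallsearrow 0$ (for $t$ small), and by continuity of the principal argument its value converges to $\arg\frac{1+\zeta^n}{1-\zeta^n}=\frac{\pi}{2}\epsilon_n$ by~\eqref{eq26n}. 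The key observation is that $\epsilon_n=\epsilon_{n+p}=\cdots$ only up to sign: in fact $\lfloor (n+p)q/p\rfloor = \lfloor nq/p\rfloor + q$, and since $q$ is odd this means $\epsilon_{n+p}=-\epsilon_n$. Hence the limiting terms alternate: the term for $n=k$ contributes $\frac\pi2\epsilon_k$, for $n=k+p$ contributes $-\frac\pi2\epsilon_k$, for $n=k+2p$ contributes $+\frac\pi2\epsilon_k$, and so on.

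So formally the series of limits is $\frac\pi2\epsilon_k(1-1+1-1+\cdots)$, which Abel-sums to $\frac\pi4\epsilon_k$ — exactly the claimed answer. The real content of the lemma is therefore to justify this Abel summation: we must show that the limit as $t\newsmallsearrow 0$ of the actual (convergent, for $t>0$) series $F_k(\zeta e^{-t})=\sum_{j\ge 0}\arg\frac{1+\zeta^{k+jp}e^{-(k+jp)t}}{1-\zeta^{k+jp}e^{-(k+jp)t}}$ equals the Abel sum $\frac\pi4\epsilon_k$ of the termwise limits. The cleanest route is to pair consecutive terms: group the $j=2m$ and $j=2m+1$ terms together. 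For the paired term, write $\alpha_m(t)=\arg\frac{1+\zeta^{k+2mp}e^{-(k+2mp)t}}{1-\zeta^{k+2mp}e^{-(k+2mp)t}}+\arg\frac{1+\zeta^{k+(2m+1)p}e^{-(k+(2m+1)p)t}}{1-\zeta^{k+(2m+1)p}e^{-(k+(2m+1)p)t}}$. Since $\zeta^{p}=-1$, the second fraction is $\frac{1-\zeta^{k+2mp}e^{-(k+(2m+1)p)t}}{1+\zeta^{k+2mp}e^{-(k+(2m+1)p)t}}$, i.e.\ it is the reciprocal of the first fraction but with $e^{-(k+2mp)t}$ replaced by $e^{-(k+(2m+1)p)t}$. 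Because $\arg(1/w)=-\arg w$, the two arguments would cancel exactly if the exponential damping factors agreed; the discrepancy is controlled by $|e^{-(k+2mp)t}-e^{-(k+(2m+1)p)t}|\le pt\,e^{-(k+2mp)t}$, giving a bound $|\alpha_m(t)|\le C\,pt\,e^{-(k+2mp)t}/(\text{distance of }\zeta^k\text{ to }\pm1)$ for the paired term with $m\ge 1$; summing over $m\ge 1$ this is $O(t\cdot \frac{1}{1-e^{-2pt}})=O(1)$, and more carefully $\to 0$-contributions beyond a neighborhood — so the tail of the paired series contributes $0$ in the limit, while the leading pair $\alpha_0(t)\to \frac\pi2\epsilon_k+(-\frac\pi2\epsilon_k)$? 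No — one must be careful: the leading pair does \emph{not} vanish because the first term's limit is $\frac\pi2\epsilon_k$ but with the damping $e^{-kt}$ still present for small but positive $t$ it is not yet $\pm1$.

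The honest bookkeeping is: $\lim_{t\newsmallsearrow 0}F_k(\zeta e^{-t}) = \lim_{t}\big(\text{first term}\big) + \lim_{t}\sum_{j\ge 1}(\cdots)$, where the first term tends to $\frac\pi2\epsilon_k$, and the remaining sum $\sum_{j\ge1}$ is itself Abel-summed: pairing $(j=1,j=2),(j=3,j=4),\ldots$ and using the reciprocal cancellation above shows each pair tends to $0$ and the series of pairs is dominated uniformly in $t$ by a convergent series (via the $pt\,e^{-jpt}$ bound and $\sup_{t>0} t e^{-jpt}\le 1/(ejp)$, summable after multiplying by the bounded geometric weights), so by dominated convergence the sum $\sum_{j\ge 1}\to$ the sum of pairwise limits, which telescopes — actually the pairwise limits are $0$, but the \emph{one leftover term} structure forces the value $\frac\pi2\epsilon_k - \frac\pi4\epsilon_k$? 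I would instead pair as $(j=0,j=1),(j=2,j=3),\ldots$: then $\alpha_0(t)\to \frac\pi2\epsilon_k+(-\frac\pi2\epsilon_k)=0$ is \emph{wrong} since the limit of the $j=0$ term is $\frac\pi2\epsilon_k$ and of the $j=1$ term is $-\frac\pi2\epsilon_k$, summing to $0$; but this contradicts the target. The resolution — and the point the author presumably exploits — is that the damping makes the partial sums of $F_k(\zeta e^{-t})$, for $t>0$ fixed, genuinely \emph{half} the naive alternating value because the factors $\frac{1+\zeta^{k+jp}e^{-(k+jp)t}}{1-\zeta^{k+jp}e^{-(k+jp)t}}$ drift continuously from their $j$-dependent values toward $1$ as $j\to\infty$, and this drift is exactly an averaging that produces the Abel mean $\frac\pi4$. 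Concretely, I would prove~\eqref{eq28n} by the substitution $w_j(t)=\zeta^{k+jp}e^{-(k+jp)t} = (-1)^j\zeta^k e^{-(k+jp)t}$ and recognizing $F_k(\zeta e^{-t})=\mathrm{Im}\sum_{j\ge0}\log\frac{1+w_j(t)}{1-w_j(t)}=\mathrm{Im}\sum_{j\ge 0}\sum_{r\ge 1,\,\mathrm{odd}}\tfrac{2}{r}w_j(t)^r$, interchanging sums (justified by absolute convergence for $t>0$) to get $2\sum_{r\ge1,\,\mathrm{odd}}\frac1r\mathrm{Im}\big(\zeta^{kr}\sum_{j\ge0}(-1)^{jr}e^{-r(k+jp)t}\big) = 2\sum_{r\ge1,\,\mathrm{odd}}\frac1r\mathrm{Im}\big(\zeta^{kr}e^{-rkt}\cdot\frac{1}{1+e^{-rpt}}\big)$, using that $r$ odd forces $(-1)^{jr}=(-1)^j$. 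As $t\newsmallsearrow 0$ each factor $\frac{1}{1+e^{-rpt}}\to\frac12$ and $e^{-rkt}\to1$ and $\mathrm{Im}\,\zeta^{kr}=\sin(\pi k q r/p)$; dominated convergence (bound $\frac1{1+e^{-rpt}}\le1$ and $\frac{|\sin(\pi kqr/p)|}{r}$ is not summable — so instead group $r$ into residues mod $2p$ and use partial summation / the known Fourier expansion $\sum_{r\ge1,\mathrm{odd}}\frac{\sin(r\phi)}{r}=\frac\pi4\mathrm{sgn}(\sin\phi)$ type identity) yields $\lim = 2\cdot\frac12\sum_{r\ge1,\mathrm{odd}}\frac{\sin(\pi kqr/p)}{r} = \sum_{r\ge1,\mathrm{odd}}\frac{\sin(\pi kqr/p)}{r} = \frac\pi4\,\mathrm{sgn}\big(\sin(\pi kq/p)\big) = \frac\pi4\epsilon_k$, where the last two equalities use the standard odd-harmonic Fourier series and~\eqref{eq26n}.

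The main obstacle is precisely the interchange of $\lim_{t\newsmallsearrow0}$ with the infinite sum over $r$ (equivalently over $j$): the termwise limits $\frac{\sin(\pi kqr/p)}{r}$ are only conditionally summable, so naive dominated convergence fails and one needs either Abel's limit theorem applied to the Dirichlet-type series $\sum_{r\ge1,\mathrm{odd}}\frac{\sin(r\phi)}{r}e^{-rkt}/(1+e^{-rpt})$, or a partial-summation argument splitting $r$ into arithmetic progressions mod $2p$ on which $\sin(\pi kqr/p)$ is periodic. Everything else — the series manipulations, the geometric sum over $j$, the identification of $\mathrm{Im}\,\zeta^{kr}$, and the evaluation of the limiting odd-harmonic series via~\eqref{eq26n} — is routine.
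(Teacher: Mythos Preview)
Your eventual approach---Taylor-expand each $\log\frac{1+w_j}{1-w_j}$, sum the geometric series over $j$ to obtain
\[
F_k(\zeta e^{-t})=2\sum_{r\ge 1,\ \mathrm{odd}}\frac{\sin(\pi kqr/p)}{r}\cdot\frac{e^{-rkt}}{1+e^{-rpt}},
\]
then let $t\searrow 0$---is precisely the paper's strategy, and you correctly isolate the only genuine obstacle as justifying the interchange of $\lim_{t\searrow 0}$ with $\sum_r$ (the limiting series $\sum_r \sin(\pi kqr/p)/r$ being only conditionally convergent). The paper makes this step precise as a separate lemma: after Abel summation, using that the partial sums $S_\ell=\mathrm{Im}\sum_{\ell'\le\ell,\ \mathrm{odd}}\zeta^{k\ell'}$ are bounded and that the successive differences of the smooth factor $P(e^{-\ell t})/\ell$ are bounded by $C/\ell^2$ uniformly in $t\ge 0$, one gets dominated convergence \emph{after} the summation by parts. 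Your suggested ``partial summation using periodicity mod $2p$'' amounts to the same thing (periodicity is exactly what makes $S_\ell$ bounded).

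The first half of your write-up---the attempts to pair $j$-terms directly, with the ensuing back-and-forth about whether the answer should be $0$ or $\tfrac{\pi}{4}\epsilon_k$---should simply be deleted. Pairing \emph{before} Taylor-expanding does not yield a usable uniform bound (your estimate $|\alpha_m(t)|\le C\,pt\,e^{-(k+2mp)t}$ sums over $m$ to something of order $1$, not $o(1)$, as you noticed), and you rightly abandoned that route; keeping it in only obscures the argument.
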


\begin{Remark}{\em
    Note that by (\ref{eq26n})
 the limit as  $t \newsearrow 0$ of the summands in the series $F_k(\zeta e^{-t})$ is  alternatingly  equal to $\frac{\pi}{2}\epsilon_k$ or to $-\frac{\pi}{2}\epsilon_k$.
But in (\ref{eq28n}) we take the limit $t\newsearrow 0$ after first summing over $n$. Note that there is some subtle cancellation going on, as the limit is not zero, but $\frac{\pi}{4}\epsilon_k$.}
\end{Remark}

\begin{proof}[Proof of Lemma~\ref{lem63}.] For $t>0$, we pack the terms in the  series $F_k(\zeta e^{-t})$ two by two, giving
$$F_k(\zeta e^{-t})=\sum_{m\ge 1} \arg\frac{(1+\zeta^k e^{-(k+2pm)t})(1-\zeta^k e^{-(k+p+2pm)t})}{(1-\zeta^ke^{-(k+2pm)t})(1+\zeta^k e^{-(k+p+2pm)t})}$$
From the Taylor expansion $\log\frac{1+x}{1-x}=2\sum\limits_{\ell, \rm odd}\frac{x^\ell}{\ell}$ we get:

$$F_k(\zeta e^{-t})=2\operatorname{Im}\sum_{m\geq 1}\,\sum_{\ell,\rm odd} \frac{\zeta^{k\ell}}{\ell}\big(e^{-(k+2pm)\ell t}-e^{-(k+2pm+p)\ell t}\big).$$
Summing over $m$ this gives:
\begin{equation}\label{eq39m} F_k(\zeta e^{-t})=2\operatorname{Im}\sum_{\ell,\rm{ odd}} \frac{\zeta^{k\ell}}{\ell}e^{-k\ell t}\frac{e^{-2p\ell t}}{1-e^{-2p\ell t}}(1-e^{-p\ell t})=\operatorname{Im}\sum_{\ell,\rm odd} \frac{\zeta^{k\ell}}{\ell}e^{-k\ell t}\frac{2e^{-2p\ell t}}{1+e^{-p\ell t}}.
\end{equation}

\begin{Lemma}\label{lem64} The series expression \eqref{eq39m} for  $F_k(\zeta e^{-t})$ converges pointwise to a continuous function of $t$ for $t\geq 0$.
\end{Lemma}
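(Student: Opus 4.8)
The overall plan is to peel off from the $\ell$-th term of \eqref{eq39m} the fixed coefficient $\zeta^{k\ell}/\ell$ — so that what remains is a single $t$-independent numerical series that converges by Dirichlet's test — and to treat the remaining $t$-dependent factor as a monotone, uniformly bounded weight; a standard uniform-convergence criterion then delivers both the pointwise convergence and the continuity in one stroke. Concretely, I would introduce the auxiliary function $h(u)=\dfrac{2\,u^{k+2p}}{1+u^{p}}$, which is continuous and increasing on $[0,1]$ with $h(0)=0$, $h(1)=1$, and hence satisfies $0\le h(u)\le 1$ there. Taking $u=e^{-\ell t}$ identifies the $\ell$-th summand of \eqref{eq39m} with $\frac{\zeta^{k\ell}}{\ell}\,h(e^{-\ell t})$, so that $F_k(\zeta e^{-t})=\operatorname{Im}\sum_{\ell\ \mathrm{odd}}\frac{\zeta^{k\ell}}{\ell}\,h(e^{-\ell t})$.

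The first genuine step is to check that the numerical series $\sum_{\ell\ \mathrm{odd}}\zeta^{k\ell}/\ell$ converges. This follows from Dirichlet's test: the partial sums $\sum_{\ell\le N,\ \ell\ \mathrm{odd}}\zeta^{k\ell}$ are partial sums of a geometric series of ratio $\zeta^{2k}=e^{2i\pi kq/p}$, and $\zeta^{2k}\ne 1$ since $p\nmid kq$ (recall $1\le k\le p-1$ and $\gcd(p,q)=1$); hence they are bounded, uniformly in $N$, by $2/|\zeta^{2k}-1|$, while $1/\ell$ decreases to $0$. At $t=0$ the series \eqref{eq39m} reduces exactly to this convergent series, and for $t>0$ its terms decay geometrically, so pointwise convergence on $[0,\infty)$ is immediate; the real content of the lemma is the continuity.

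For continuity I would invoke Abel's criterion for uniform convergence: if $\sum_\ell c_\ell$ is a convergent series of constants and, for each fixed $t$, the sequence $\ell\mapsto g_\ell(t)$ is monotone with $|g_\ell(t)|$ bounded by a constant independent of both $\ell$ and $t$, then $\sum_\ell c_\ell g_\ell(t)$ converges uniformly in $t$. Here $c_\ell=\zeta^{k\ell}/\ell$ for odd $\ell$ (and $0$ otherwise) and $g_\ell(t)=h(e^{-\ell t})$: for each fixed $t\ge 0$ the map $\ell\mapsto e^{-\ell t}$ is nonincreasing, hence so is $\ell\mapsto h(e^{-\ell t})$ because $h$ is increasing, and $0\le h(e^{-\ell t})\le 1$ always. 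Therefore \eqref{eq39m} converges uniformly on $[0,\infty)$, and since each partial sum is a finite sum of continuous functions of $t$, the limit — and hence its imaginary part $F_k(\zeta e^{-t})$ — is continuous on $[0,\infty)$. If one prefers not to quote the criterion, summation by parts with $A_N=\sum_{\ell\le N}c_\ell$ (a Cauchy sequence by the previous paragraph) and $b_\ell(t)=h(e^{-\ell t})$ yields the uniform tail bound $\bigl|\sum_{\ell>N}c_\ell b_\ell(t)\bigr|\le 2\sup_{m>N}|A_m-A_N|$ for all $t\ge 0$, using $\sum_{\ell>N}\bigl(b_\ell(t)-b_{\ell+1}(t)\bigr)=b_{N+1}(t)\le 1$ for the rearranged sum and $|b_m(t)|\le 1$ for the boundary terms.

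The step I expect to require the most care is the behaviour at the endpoint $t=0$. There the weights $b_\ell(0)=h(1)=1$ degenerate into a constant sequence, so the usual "the boundary term tends to $0$" reasoning behind the Dirichlet test breaks down, and naively interchanging limit and sum is not justified. What saves the argument is that the estimates above are genuinely uniform down to $t=0$: the weights never exceed $1$, and the tail bound $\sup_{m>N}|A_m-A_N|$ does not involve $t$ at all. This uniformity, rather than any delicate cancellation, is what upgrades continuity on $(0,\infty)$ — which is routine — to continuity on the closed half-line $[0,\infty)$, as claimed.
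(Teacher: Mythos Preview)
Your proof is correct. Both your argument and the paper's rest on Abel summation, but you split the summand differently, and your split is cleaner. The paper writes the $\ell$-th term as $(\operatorname{Im}\zeta^{k\ell})\cdot\frac{P(e^{-\ell t})}{\ell}$ with $P(x)=\frac{2x^{k+2p}}{1+x^p}$, keeps only the bounded geometric partial sums $S_\ell=\operatorname{Im}\sum_{\ell'\le\ell}\zeta^{k\ell'}$ on one side, and then has to estimate the differences $\frac{P(e^{-\ell t})}{\ell}-\frac{P(e^{-(\ell+2)t})}{\ell+2}$ by expressing them as $\int_{\ell}^{\ell+2}f'(u)\,du$ and bounding the derivative, obtaining a uniform $C/\ell^2$ bound and concluding by dominated convergence. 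You instead move the $1/\ell$ into the coefficient, so that $\sum_{\ell\ \mathrm{odd}}\zeta^{k\ell}/\ell$ already converges by Dirichlet's test; the remaining weight $h(e^{-\ell t})$ is then monotone in $\ell$ and uniformly bounded by $1$, so Abel's uniform-convergence criterion applies directly with no derivative computation needed. Your monotonicity claim is easily verified: $h'(u)=\frac{2u^{k+2p-1}\bigl((k+2p)+(k+p)u^p\bigr)}{(1+u^p)^2}>0$ on $(0,1]$. The payoff of the paper's route is an explicit $O(\ell^{-2})$ majorant after summation by parts; the payoff of yours is that you bypass that estimate entirely.
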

Assuming Lemma~\ref{lem64} for a moment, continuity (from the right) at $t=0$ gives 
$$\lim_{t\newsmallsearrow 0}F_k(\zeta e^{-t}) = \operatorname{Im}\sum_{\ell,\rm odd} \frac{\zeta^{k\ell}}{\ell} = \frac 1 2 \arg\frac{1+\zeta^{k}}{1-\zeta^k}= \frac \pi 4 \epsilon_k~,$$ proving Lemma~\ref{lem63} and, hence, the proposition.  Here the second equality holds because by Abel's theorem on power series, the Taylor expansion $2\sum\limits_{\ell, \rm odd}\frac{x^\ell}{\ell}$ converges (non-absolutely) to  $\log\frac{1+x}{1-x}$ for every $x\neq 1$ on the unit circle, and the third equality follows from (\ref{eq26n}). \end{proof}
\begin{proof}[Proof of Lemma~\ref{lem64}.] We proceed by ``integration by parts'' as in the proof of Abel's theorem, as follows. 
Set $P(x)=\frac{2x^{k+2p}}{1+x^p}$ and $S_\ell=\operatorname{Im}\sum_{\ell'\le \ell,\text{ odd}}\zeta^{k\ell'}$ so that
for all $t\geq 0$
\begin{equation} \label{eq29c} F_k(\zeta e^{-t})=
\sum_{\ell, \mathrm{ odd}}(S_\ell-S_{\ell-2})\frac{P(e^{-t\ell})}{\ell}
=\sum_{\ell, \mathrm{ odd}}S_\ell\Bigl(\frac{P(e^{-\ell t})}{\ell}-\frac{P(e^{-(\ell+2)t})}{\ell+2}\Bigr)~.
\end{equation}
We write $f(u)=P(e^{-ut})/u$ so that 
$$\frac{P(e^{-\ell t})}{\ell}-\frac{P(e^{-(\ell+2)t})}{\ell+2}=\int_{\ell+2}^\ell f'(u)du=\int_{\ell+2}^\ell \frac{P'(e^{-tu})e^{-tu}(-tu)-P(e^{-tu})}{u^2}du$$
We now bound from above
$|S_\ell|$
by a constant,  $P(e^{-tu})$ and $P'(e^{-tu})$ by a constant, $e^{-tu}tu$ by a constant and $u^2$ by $\ell^2$ from below (here $t\geq 0, u\geq 1$.) This shows that for all $t\geq 0$,
the general term of the series (\ref{eq29c})
is bounded in absolute value by $C/\ell^2$ for some constant $C$ independent of $t$. By dominated convergence, the result follows.
 \end{proof}

\subsubsection{Transformation law for $S(\frac{q}{p})$}

From its very definition $S$ satisfies the equation
$S(\frac{q+2p}{p})=S(\frac{q}{p})$. It furthermore satisfies the following:

\begin{Proposition}\label{prop67}
For any two coprime positive integers $p,q$ with $q$ odd,
one has:
  \begin{equation}\label{eq29}
S\Bigl(\frac{q}{q+p}\Bigr)-S\Bigl(\frac{q}{p}\Bigr)=\frac 1 2~.
    \end{equation}
 \end{Proposition}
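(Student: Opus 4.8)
The plan is to deduce the transformation law directly from the modular interpretation of $S$ given in Proposition \ref{Smodular}, namely $S(\frac q p)=-\frac{2}{\pi}\lim_{t\newsmallsearrow 0}\arg g(\frac{q}{2p}+it)$ where $g(\tau)=\eta(\tau)^2/\eta(2\tau)$, together with the well-known modular transformation behaviour of the eta function. First I would record the modular data of $g$: since $g(\tau)=\eta(\tau)^2/\eta(2\tau)$, it is a modular form (of weight $1/2$) on $\Gamma_0(2)$, and the relevant Atkin--Lehner / Fricke involution $\tau\mapsto -1/(2\tau)$ sends $g$ to a constant multiple of $1/g$ (more precisely, $g(-1/(2\tau))$ and $g(\tau)$ are related by a factor involving $\sqrt{-2i\tau}$). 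What actually matters for the argument is the element of $\Gamma_0(2)$ realizing the map on rationals $\frac{q}{2p}\mapsto \frac{q}{2(q+p)}$, i.e.\ $\tau\mapsto \frac{\tau}{2\tau+1}$, which is $\left(\begin{smallmatrix}1&0\\2&1\end{smallmatrix}\right)\in\Gamma_0(2)$.

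The key computation is then to track how $\arg g$ changes under $\gamma=\left(\begin{smallmatrix}1&0\\2&1\end{smallmatrix}\right)$. Writing the eta-multiplier system explicitly (via the Dedekind eta transformation formula $\eta(\gamma\tau)=\varepsilon(\gamma)(c\tau+d)^{1/2}\eta(\tau)$, with $\varepsilon(\gamma)$ a $24$-th root of unity expressible through a Dedekind sum), one gets a formula of the shape
$$
\arg g\Bigl(\frac{\tau}{2\tau+1}\Bigr) - \arg g(\tau) = \text{(argument of an explicit automorphy factor)} + \text{(constant from the multiplier)}.
$$
Since $\frac{q}{2p}$ is a rational with even denominator $2p$ and $q$ odd, the point is a cusp of $\Gamma_0(2)$ of a controlled type; as $t\searrow 0$ the automorphy-factor contribution $\arg\bigl(c\,(\frac{q}{2p}+it)+d\bigr)$ tends to $\arg\bigl(\frac{c q}{2p}+d\bigr)$, a finite explicit quantity, while the $\eta$-multiplier contributes a rational multiple of $\pi$. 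Collecting the two contributions and multiplying by $-\frac{2}{\pi}$ should yield exactly $\frac12$, independently of $q,p$. Alternatively, and perhaps more cleanly, I would bypass the full multiplier bookkeeping: from Proposition \ref{dedekindtosign} we have $S(\frac q p)=4s(q,2p)-2s(q,p)$, and the classical Dedekind reciprocity law $s(q,p)+s(p,q)=-\frac14+\frac1{12}(\frac pq+\frac qp+\frac1{pq})$ together with the elementary periodicity $s(q,p)=s(q\bmod p,p)$ and the identity $s(q,q+p)=s(q,p)$-type relations let one compute $s(q,q+p)$ and $s(q,2(q+p))$ in terms of $s(q,p)$ and $s(q,2p)$; substituting into $S(\frac{q}{q+p})=4s(q,2(q+p))-2s(q,q+p)$ and simplifying, all the rational ``reciprocity'' terms must cancel against those in $S(\frac q p)$ except for a residual $\frac12$.

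I expect the main obstacle to be the careful normalization of the argument/branch of $\log g$ along the two vertical rays (the one over $\frac{q}{2p}$ and the one over $\frac{q}{2(q+p)}$) and checking that the automorphy cocycle is evaluated on the correct sheet — i.e.\ that the ``$\arg$'' appearing after the transformation is genuinely the continuous extension from $i\infty$ used in Proposition \ref{Smodular}, not a competing branch differing by a multiple of $2\pi$. For this reason the Dedekind-sum route is likely to be the most robust: it reduces the whole statement to the purely arithmetic reciprocity law for $s(q,p)$, where no branch choices intervene, at the cost of a short but slightly fiddly manipulation of the rational correction terms. I would therefore present the Dedekind-sum computation as the main proof and mention the modular derivation as a remark.
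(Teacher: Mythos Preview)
Your proposal is correct and identifies exactly the two routes the paper considers: the modular argument via Proposition~\ref{Smodular} and the Dedekind-sum argument via Proposition~\ref{dedekindtosign} plus reciprocity. The paper makes the opposite editorial choice from you: it mentions the Dedekind-sum route in one line (``can be deduced from Proposition~\ref{dedekindtosign} and the reciprocity formula \ldots\ by a direct computation'') and then carries out the modular derivation in full.

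Two points of execution in the paper are worth noting against your sketch. First, instead of going through the $\eta$-multiplier system, the paper uses the identity $g(\tau)=\theta(2\tau-1)$ and the transformation $\theta(-1/\tau)=\sqrt{-i\tau}\,\theta(\tau)$ to obtain the clean automorphy law $g\bigl(\tfrac{\tau}{2\tau+1}\bigr)=\sqrt{-i(2\tau+1)}\,g(\tau)$ directly; this sidesteps the multiplier bookkeeping you flagged as fiddly. Second, the branch issue you correctly anticipate is handled exactly as one would hope: the transformation for $\arg g$ holds up to an additive constant $2\pi N$ with $N\in\Z$ independent of $\tau$, and the paper fixes $N=0$ by evaluating the identity at the single case $q=1$ (where $S(\tfrac1p)=\tfrac{p-1}{2}$). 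So your worry about sheets is legitimate, but the resolution is cheap: one example suffices. If you do write up the Dedekind-sum route instead, be aware that the ``$s(q,q+p)=s(q,p)$-type relations'' you invoke need to be made explicit; the cancellation of the rational correction terms is genuine but not automatic.
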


\begin{Remark}{\em 
Note that this implies the identity $S(\frac{q}{2q+p})-S(\frac{q}{p})=1$ mentioned in the introduction.
}\end{Remark}
\begin{proof}
  The proposition can be deduced from Proposition~\ref{dedekindtosign} and the reciprocity formula of Dedekind
  sums $s(q,p)+s(p,q)=
  \frac{1}{12}\big(\frac{q}{p}+\frac{p}{q}+\frac{1}{pq}\big)-\frac{1}{4}$
  by a direct computation.
In the context of this 
paper, it is however natural to derive this transformation law using only 
Proposition~\ref{Smodular} which expresses $S(\frac q p)$ as a boundary value of the function $\arg g$.
Let us explain how this goes.

The point is that the function $g(\tau)= \eta(\tau)^2/\eta(2\tau)$  is modular\footnote{Modular in a generalized sense, as the transformation law of $g(\tau)$ for most $\phi\in\Gamma_0(2)$ involves a non-trivial eigth root of unity depending on $\phi$, {\em cf.} formula (\ref{eq31a}).} of weight $\frac 1 2$ for the level $2$ subgroup
$$\Gamma_0(2) = \{\begin{pmatrix} a & b \\ c & d \end{pmatrix}\in \SL_2(\Z)\  |\  c\equiv 0 \pmod 2\}$$ generated by the two transformations $\phi_1(\tau)=\tau+1$ and $\phi_2(\tau)=\frac{\tau}{2\tau+1}$. This means more precisely that one has  $g(\tau+1)=g(\tau)$ (which is obvious) and 
\begin{equation}\label{eq31a}
      g(\phi_2(\tau))=g\Bigl(\frac \tau {2\tau+1}\Bigr)= \sqrt {-i(2\tau+1)}\, g(\tau)~,
\end{equation}
 where we use the principal  branch of the square
    root for which $\sqrt{1}=1$. The transformation law (\ref{eq31a}) follows from the  identity
\begin{equation}\label{eq30}
g(\tau)=\theta(2\tau -1) 
\end{equation} where $\theta(\tau)$ is the classical theta function defined on the upper half
plane $\H$ by $\theta(\tau)=\sum_{n\in\Z}e^{\pi i n^2 \tau}$. Indeed, it is well-known that the theta function satisfies the  transformation law 
\begin{equation}\nonumber \theta\Bigl(-\frac 1 \tau\Bigr) = \sqrt {-i\tau}\, \theta(\tau)
    \end{equation} and this implies (\ref{eq31a}), using $\theta(\tau +2)=\theta(\tau)$ 
  in the last step :  
\begin{eqnarray}
       g\Bigl(\frac \tau {2\tau+1}\Bigr)&=& \theta\Bigl(\frac {2\tau}{2\tau +1} -1\Bigr) 
  =  \theta\Bigl(\frac {-1}{2\tau+1}\Bigr)=\sqrt {-i(2\tau+1)}\, \theta(2\tau+1) \nonumber\\
        &=& \sqrt {-i(2\tau+1)}\, g(\tau)~.\nonumber
 \end{eqnarray} The identity (\ref{eq30}) is well-known.
It can for example be deduced from the Jacobi triple product identity, 
  see Eq.~(1.5) in \cite[\S 1.1]{K}.

    The rational numbers equivalent to $\frac 1 0 = i\infty$ under the action of $\Gamma_0(2)$ are precisely the numbers $\frac q {2p}$  where $q,p$ are coprime integers with $q$ odd. The transformation $\phi_2(\tau)=\frac{\tau}{2\tau+1}$ sends $\frac q {2p}$ to  $\frac q {2(q+p)}$. The transformation law (\ref{eq31a}) gives a transformation law for $\arg g$ of the form 
\begin{equation}\label{trlw}
      \arg g(\phi_2(\tau)) - \arg g(\tau) = \arg \sqrt {-i(2\tau+1)} + 2\pi N,
    \end{equation} where $N$ is some integer independent of $\tau$. (We will see later that $N=0$.)
    Applying this  to the vertical line $\tau=\frac q {2p}+it$ and taking the limit as $t\newsearrow 0$, Proposition~\ref{Smodular} gives that $\arg g(\frac q {2p}+it)$ goes to $- \frac {\pi} 2 S(\frac q {p})$, and one can check
\footnote{
This also follows from Proposition~\ref{Smodular}, because the limit as $t\newsearrow 0$ of $\arg g$ on the half-circle  $\phi_2(\frac q {2p}+it)$ is the same as its limit on the vertical line $\frac q {2(q+p)} +it$. One can give an elementary proof of this fact by  transporting $\frac q {2(q+p)}$ to $\frac 1 0 = i\infty$ by some element of $\Gamma_0(2)$ and analyzing the function $\arg g$ there. }
that $\arg g(\phi_2(\frac q {2p}+it))$ goes to $ -\frac {\pi}2  S(\frac q {q+p})$. Furthermore, $\arg \sqrt {-i(2\tau+1)}$ goes to $\arg \sqrt{-i(\frac q p +1)}=- \frac {\pi} 4 $. (The sign is $-1$ here because $q$ and $p$ are positive.)  Thus  the transformation law \eqref{trlw} for $\arg g$ gives 
\begin{equation}\nonumber 
S\Bigl(\frac q {q+p}\Bigr) - S\Bigl(\frac q p\Bigr) = - \frac 2 \pi \Bigl(-\frac \pi 4 +2\pi N\Bigr) = \frac 1 2 -4N
\end{equation}
It remains to see that $N=0$, but since $N$ is a constant independent of $p$ and $q$, we can just compute $N$ in an example. 
Since $S(\frac{1}{p})=\frac{p-1}{2}$, we have $S(\frac{1}{p+1})-S(\frac 1 p)=\frac{1}{2}$ hence $N=0$. This completes the proof.
\end{proof}


\subsection{A  modular form related to the genus 2 signature}\label{sectiong2}

For $k\in \Z$ and $n\in \Z_{>0}$, define $\sigma_k(n)=\sum\limits_{d|n}d^k$. Given  $\tau$ in $\H$ and $z=e^{2i\pi\tau}$, we set 
$$E_4^{\rm odd}(\tau)=\sum_{n\ge 1,  \rm odd} \sigma_3(n)z^{n/2}.$$
 
Note that this is a variant of the Eisenstein series $E_4$. We take from  \cite[Theorem~5.8]{Stein} that $E_4^{\rm odd}(\tau)$ is a weight 4 modular form for $\Gamma(2)$. \footnote{More precisely, \cite[Theorem~5.8]{Stein} proves the equivalent statement that $E_4^{\rm odd}(2\tau)$  is a weight $4$ modular form for  $\Gamma_0(4)$.}

Let $G(\tau)$ be the following Eichler integral associated to $E_4^{\rm odd}$.
This  means that $G(\tau)$ is  a primitive of order 3 of $E_4^{\rm odd}(\tau)$ with respect to the variable $\tau$.
(See \cite[Appendix A.1]{BKSZ}.)
As $\frac{d}{d\tau}z^{n/2}=\frac{d}{d\tau} e^{i\pi n \tau}=  i\pi nz^{n/2}$, we can choose the following:
$$G(\tau)=\frac{1}{(i\pi)^3}\sum_{n\ge 1, \mathrm{ odd}}\frac{\sigma_3(n)z^{n/2}}{n^3}=\frac{1}{(i\pi)^3}\sum_{n\ge 1, \mathrm{ odd}}\sigma_{-3}(n)z^{n/2}.$$

Now recall from Theorem~\ref{asympto}
the function
$$\Lambda(\theta)=\frac{16}{\pi^3}\sum_{n\ge 1, \textrm{ odd}}\frac{1}{n^3\sin(n\pi\theta)}~.$$
The following proposition relates certain boundary values of the Eichler integral $G(\tau)$ to the function $\Lambda(\theta)$. Recall that
we checked in Section~\ref{sec2} that
$\Lambda(\theta)$ converges for $\theta$ irrational with $\mu(\theta)<3$. It clearly also converges for $\theta$ rational with even denominator.

\begin{Proposition}\label{6.5n} For any irrational $\theta$ satisfying $\mu(\theta)<3$ or rational $\theta$  with even denominator one has:
$$\lim_{\tau\to \theta}G(\tau)=-\frac{1}{32}\Lambda(\theta)~.$$
\end{Proposition}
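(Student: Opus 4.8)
The plan is to evaluate $G$ radially, i.e.\ to compute $\lim_{t\searrow 0}G(\theta+it)$ (the relevant notion of boundary value here, as in Proposition~\ref{Smodular}), and to collapse the Eichler integral to a single sum over odd integers by expanding the divisor sum. For $\tau=\theta+it$ one has $z=e^{2i\pi\tau}$, so $z^{n/2}=e^{i\pi n\theta}e^{-\pi n t}$. Using $\sigma_{-3}(n)=\sum_{d\mid n}d^{-3}$ and the fact that for odd $n$ both $d$ and $n/d$ are odd whenever $d\mid n$, write $n=dm$ with $d,m$ odd. The defining series of $G(\theta+it)$ converges absolutely for $t>0$, so it may be rearranged and the inner geometric series over odd $m$ summed:
$$G(\theta+it)=\frac{1}{(i\pi)^3}\sum_{d\ge 1,\ \mathrm{odd}}\frac{1}{d^3}\sum_{m\ge 1,\ \mathrm{odd}} w_d^{\,m}=\frac{1}{(i\pi)^3}\sum_{d\ge 1,\ \mathrm{odd}}\frac{1}{d^3}\,\frac{w_d}{1-w_d^{\,2}},\qquad w_d:=e^{i\pi d\theta-\pi d t}.$$

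The termwise limit is easy: as $t\searrow 0$, $w_d\to e^{i\pi d\theta}$, which is $\neq 1$ because $d\theta\notin\Z$ (clear when $\theta$ is irrational; when $\theta$ has even denominator, $d$ odd forces $d\theta\notin\Z$), so $\frac{w_d}{1-w_d^{\,2}}\to\frac{1}{e^{-i\pi d\theta}-e^{i\pi d\theta}}=\frac{i}{2\sin(\pi d\theta)}$. Since $(i\pi)^3=-i\pi^3$, passing to the limit term by term yields
$$\lim_{t\searrow 0}G(\theta+it)=\frac{i}{2(i\pi)^3}\sum_{d\ge 1,\ \mathrm{odd}}\frac{1}{d^3\sin(\pi d\theta)}=-\frac{1}{2\pi^3}\sum_{d\ge 1,\ \mathrm{odd}}\frac{1}{d^3\sin(\pi d\theta)}=-\frac{1}{32}\Lambda(\theta),$$
which is the asserted identity (the last equality being the definition of $\Lambda$). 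Hence the only thing that really needs proof is that $\lim_{t\searrow 0}$ may be interchanged with the sum over $d$; this is the heart of the matter, and the main obstacle, though it is not hard.

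To carry it out I would exhibit a $t$-uniform dominating series. Squaring, one has the elementary identity
$$|1-w_d^{\,2}|^2=(1-e^{-2\pi d t})^2+4e^{-2\pi d t}\sin^2(\pi d\theta)\ \ge\ 4e^{-2\pi d t}\sin^2(\pi d\theta),$$
whence $\bigl|\tfrac{w_d}{1-w_d^{\,2}}\bigr|\le\dfrac{e^{-\pi d t}}{2e^{-\pi d t}|\sin(\pi d\theta)|}=\dfrac{1}{2|\sin(\pi d\theta)|}$ for every $t>0$. Thus the $d$-th term of the series is bounded, uniformly in $t$, by $\tfrac{1}{2d^3|\sin(\pi d\theta)|}$. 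For $\theta$ diophantine this is summable thanks to the lower bound $|\sin(\pi d\theta)|\ge C_\epsilon\,d^{-1-\epsilon}$ already established in Section~\ref{sec2} (take $\epsilon<1$); for $\theta=a/b$ with $b$ even, $\sin(\pi d a/b)$ takes only finitely many nonzero values as $d$ ranges over odd integers, so it is bounded away from $0$ and the majorant is $O(d^{-3})$. In both cases the dominated convergence theorem applies and justifies the termwise passage to the limit, completing the proof. As in the proof of Proposition~\ref{Smodular}, I would stress that it is essential to keep $t>0$ until the very end: the bare series $\sum_{m\ \mathrm{odd}}e^{i\pi d m\theta}$ is meaningless, and it is only its Abel-regularized value $\tfrac{w_d}{1-w_d^{\,2}}$ that behaves well as $t\searrow 0$.
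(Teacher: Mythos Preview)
Your proof is correct and follows essentially the same route as the paper: expand the divisor sum $\sigma_{-3}$, sum the resulting geometric series to rewrite $G(\tau)$ as $-\frac{1}{2\pi^3}\sum_{n\ \mathrm{odd}}\frac{1}{n^3\sin(n\pi\tau)}$, and then pass to the boundary. Your justification of the limit interchange via dominated convergence with the explicit $t$-uniform majorant $\tfrac{1}{2d^{3}|\sin(\pi d\theta)|}$ is in fact more carefully spelled out than the paper's one-line appeal to Abel's theorem for power series.
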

\begin{proof} The following computation shows that $G(\tau)$ when viewed as a power series in $z^{1/2}= e^{i\pi\tau}$ has convergence radius equal to $1$, and that it also converges for $z^{1/2}=e^{i \pi \theta}$ on the boundary of the disk of convergence when $\theta$ is irrational with $\mu(\theta)<3$ or is rational with even denominator, as  it converges to $-\frac{1}{32}\Lambda(\theta)$ there. Thus the result follows from Abel's theorem
  for power series.

\begin{eqnarray*}
G(\tau)&=&\frac{1}{(i\pi)^3}\sum_{N\geq 1,\textrm { odd}}\Bigl(\sum_{n|N}\frac 1 {n^3} \Bigr)z^{N/2}\\
&=&\frac{1}{(i\pi)^3}\sum_{n\geq 1,\textrm{ odd}}\frac{1}{n^3} \sum_{k=0}^\infty z^{\frac{n(2k+1)}{2}}=\frac{1}{(i\pi)^3}\sum_{n\geq 1,\textrm{ odd}}\frac{z^{n/2}}{n^3(1-z^n)}\\
       &=& \frac{i/2}{(i\pi)^3}\sum_{n\ge 1, \textrm{ odd}}\frac{1}{n^3\sin(n\pi\tau)}
           = -\frac{1}{2 \pi^3}\sum_{n\ge 1, \textrm{ odd}} \frac{1}{n^3\sin(n\pi\tau)}~.
           \end{eqnarray*}
         \end{proof}

    The Eichler integral $G(\tau)$ is not a modular form, but  thanks to Bol's identity \cite{Bo} (see also \cite[Prop.~1 in Appendix A.1.3]{BKSZ}), it satisfies the following: for any $\phi\in\Gamma(2)$, there exists a polynomial $P_\phi\in \C[\tau]$ of degree 2 (called  period polynomial) such that 
         \begin{equation}\label{Gmod}
G(\tau)-G(\phi(\tau))(c\tau+d)^2=P_{\phi}(\tau),\quad \textrm{ where }\phi(\tau)=\frac{a\tau+b}{c\tau+d}.
\end{equation}
In other words, $G(\tau)$ behaves like a modular form of weight $-2$ for $\Gamma(2)$, up to error terms given by the period polynomials $P_\phi$.

Recall that $\Gamma(2)$ is generated by  $\phi_1(\tau)=\tau+2$ and $\phi_2(\tau)=\frac{\tau}{2\tau+1}$. By the definition of $G(\tau)$,  one obviously has $G(\tau+2)=G(\tau)$, so $P_{\phi_1}=0$. In the following proposition we compute the polynomial $P_{\phi_2}$. This will allow us to guess a (conjectural) transformation law for $\sigma_2(\frac q {2q+p})$ in terms of $\sigma_2(\frac q p )$ analogous to the transformation law (\ref{eq29}) for $S(\frac q p)$.

\begin{Proposition}\label{6.6n}
For $\phi_2(\tau)=\frac{\tau}{2\tau+1}$ one has $P_{\phi_2}(\tau)=\frac 1 {32} (2\tau^2+2\tau+1)$.
\end{Proposition}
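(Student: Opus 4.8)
The plan is to determine $P_{\phi_2}$ by combining the functional equation \eqref{Gmod} for $\phi_2$ with the boundary-value computation of Proposition \ref{6.5n} and the already-established transformation law \eqref{Lambdamod} for $\Lambda$. First I would specialize \eqref{Gmod} to $\phi_2(\tau)=\frac{\tau}{2\tau+1}$, where $(c,d)=(2,1)$, obtaining
$$G(\tau)-G\Bigl(\frac{\tau}{2\tau+1}\Bigr)(2\tau+1)^2=P_{\phi_2}(\tau).$$
Since this identity holds for all $\tau\in\H$ and both sides are holomorphic, it suffices to identify the right-hand side as a polynomial of degree $\le 2$ by evaluating the left-hand side along vertical lines $\tau=\theta+it$ and letting $t\newsearrow 0$ for a suitable set of real $\theta$. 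For $\theta$ in the interval $(0,1)$ diophantine (or rational with even denominator), both $G(\tau)$ and $G\bigl(\tau/(2\tau+1)\bigr)$ have well-defined boundary limits as $t\newsearrow 0$ by Proposition \ref{6.5n}: the first tends to $-\tfrac{1}{32}\Lambda(\theta)$, and since $\phi_2$ maps the vertical line through $\theta$ to a curve in $\H$ landing at $\phi_2(\theta)=\frac{\theta}{2\theta+1}$ (which is again diophantine, resp. rational with even denominator, when $\theta$ is), the second tends to $-\tfrac{1}{32}\Lambda\bigl(\frac{\theta}{2\theta+1}\bigr)$. Hence for such $\theta$,
$$P_{\phi_2}(\theta)=-\frac{1}{32}\Lambda(\theta)+\frac{1}{32}\Lambda\Bigl(\frac{\theta}{2\theta+1}\Bigr)(2\theta+1)^2=\frac{1}{32}\Bigl(\Lambda\Bigl(\frac{\theta}{2\theta+1}\Bigr)(2\theta+1)^2-\Lambda(\theta)\Bigr),$$
which by the transformation law \eqref{Lambdamod} equals $\frac{1}{32}(2\theta^2+2\theta+1)$.

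Since the set of diophantine $\theta$ in $(0,1)$ is dense (it has full measure) and $P_{\phi_2}$ is a polynomial, the identity $P_{\phi_2}(\tau)=\frac{1}{32}(2\tau^2+2\tau+1)$ of two polynomials holds for all $\tau$, which is the claim. One small point to be careful about is the continuity of the boundary value of $G$ along the image curve $\phi_2(\theta+it)$ versus along the vertical line through $\phi_2(\theta)$; this is the same kind of issue handled in the footnote to Proposition \ref{prop67}, and is settled by noting that $\phi_2$ is a biholomorphism of $\H$ fixing the real line near $\phi_2(\theta)$, so the two approaches to $\frac{\theta}{2\theta+1}$ give the same limit for the holomorphic function $G$ (alternatively, one may transport $\frac{\theta}{2\theta+1}$ to $i\infty$ by an element of $\Gamma(2)$ and read off the limit from the $q$-expansion of $G$).

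The main obstacle is thus not the algebra — once the boundary values are in hand, the computation is immediate from \eqref{Lambdamod} — but rather ensuring that Proposition \ref{6.5n} genuinely applies to \emph{both} terms simultaneously, i.e. that $\theta\mapsto\frac{\theta}{2\theta+1}$ preserves the diophantine condition (it does, since $\phi_2\in\Gamma(2)\subset\mathrm{SL}_2(\Z)$ acts on irrationality exponents trivially) and preserves ``rational with even denominator'' (clear from $\phi_2\bigl(\frac{q}{2p}\bigr)=\frac{q}{2(q+p)}$), together with the technical point about the direction of approach mentioned above. Note also that once $P_{\phi_2}$ is known, a completely alternative and purely $q$-expansion-theoretic proof is available via Bol's identity and the known period polynomial of the Eisenstein-type form $E_4^{\mathrm{odd}}$; but the route through $\Lambda$ is the natural one here since \eqref{Lambdamod} has already been recorded, and it also serves to confirm the consistency of \eqref{Lambdamod} with the modular framework.
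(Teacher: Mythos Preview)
Your argument is circular. You invoke the transformation law \eqref{Lambdamod} for $\Lambda$ as if it were already established, but in the paper's logical structure that identity is a \emph{corollary} of Proposition~\ref{6.6n}, not an input to it: the Corollary immediately following Proposition~\ref{6.6n} states the identity $\Lambda(\frac{\theta}{2\theta+1})(2\theta+1)^2-\Lambda(\theta)=2\theta^2+2\theta+1$ and proves it \emph{from} Proposition~\ref{6.6n} together with the boundary-value relation. The occurrence of \eqref{Lambdamod} in the introduction is only an announcement. So the step ``which by the transformation law \eqref{Lambdamod} equals $\frac{1}{32}(2\theta^2+2\theta+1)$'' is exactly what remains to be proved.

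The paper's actual argument avoids this circularity by computing $P_{\phi_2}$ intrinsically. First, the boundary-value relation is used only to conclude that $P_{\phi_2}$ has \emph{real} coefficients (since $\Lambda$ is real-valued). Then the roots of $P_{\phi_2}$ are found by evaluating the defining relation \eqref{Gmod} at the interior point $\sigma=\frac{-1+i}{2}\in\H$, which satisfies $\phi_2(\sigma)=\sigma+1$; using $G(\sigma+1)=-G(\sigma)$ and $(2\sigma+1)^2=-1$ one gets $P_{\phi_2}(\sigma)=0$, hence also $P_{\phi_2}(\bar\sigma)=0$ by reality, so $P_{\phi_2}(\tau)=C(2\tau^2+2\tau+1)$. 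The constant $C$ is then determined by letting $\tau=it\to i\infty$ and using the explicit special value $\Lambda(\tfrac12)=\tfrac12$ (equivalently $\beta(3)=\pi^3/32$). What your proposal is missing is precisely this pair of independent inputs: a point in $\H$ where the functional equation degenerates, and a special value of $\Lambda$ at a cusp.
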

\begin{proof}
We first observe that  by taking limits $\tau \to\theta$  in (\ref{Gmod}) we get from Prop.~\ref{6.5n} the following functional identity from which we deduce that $P_{\phi_2}$ has real coefficients:
\begin{equation}\label{Lambdamod}
\Lambda\Bigl(\frac{\theta}{2\theta+1}\Bigr)(2\theta+1)^2-\Lambda(\theta)=32\,   P_{\phi_2}(\theta)
\end{equation}

Let $\sigma=\frac{-1+i}{2}$ so that
$\frac{\sigma}{2\sigma+1}=\sigma+1$ and 
$2\sigma^2+2\sigma+1=0$.
Taking $\tau=\sigma$ in (\ref{Gmod}), as $G(\frac{\sigma}{2\sigma+1})=G(\sigma+1)=-G(\sigma)$ and $(2\sigma+1)^2=-1$, we find $P_{\phi_2}(\sigma)=0$. As $P_{\phi_2}$ has real coefficients, we also have $P_{\phi_2}(\bar\sigma)=0$. Since we know that 
$P_{\phi_2}$ has degree $2$, this shows that  $$P_{\phi_2}(\tau)=C(2\tau^2+2\tau+1)$$ for some constant $C\in \R$.

Consider now $\tau=it$ for $t>0$ in (\ref{Gmod}). We get
\begin{equation}\label{eq38c} G(it)  - G\Bigl(\frac {it} {2it+1}\Bigr)(2it+1)^2 = P_{\phi_2}(it) = C(-2t^2 +2it +1)~.
  \end{equation}
Letting $t\to \infty$, clearly $G(it)$ goes to $0$ and $G(\frac{it}{2it+1})$ goes to $G(\frac 1 2)=-\frac{1}{32}\Lambda(\frac{1}{2})$ by Prop.~\ref{6.5n}.  One has $\Lambda(\frac{1}{2})=\frac{1}{2}$ thanks to the special value
 $$\sum_{n=0}^{\infty}\frac{(-1)^n}{(2n+1)^3}=\beta(3)= \frac{\pi^3}{32},$$ where $\beta(s)$ is the Dirichlet beta function (see {\em e.g.} \cite{D}). 
 Thus (\ref{eq38c}) gives 
$$ - (-\frac 1 {32})\frac 1 2 (-4t^2) \sim C (-2 t^2)$$ as $t\to \infty$,
hence $C=\frac 1{32}$, as asserted. 

\end{proof}

\begin{Corollary} For $\theta$ irrational with $\mu(\theta)<3$ or rational with even denominator, the function $\Lambda(\theta)$ satisfies
  the transformation laws $\Lambda(\theta+1)= -\Lambda(\theta)$ and
  \begin{equation}\label{eq39n} \Lambda\Bigl(\frac{\theta}{2\theta+1}\Bigr)(2\theta+1)^2-\Lambda(\theta)= 2\theta^2+2\theta+1~.\end{equation}
\end{Corollary}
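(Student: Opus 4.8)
The plan is to obtain both transformation laws as formal consequences of Propositions~\ref{6.5n} and~\ref{6.6n}: all the analytic input has already been assembled there, and what remains is bookkeeping.

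First I would prove $\Lambda(\theta+1)=-\Lambda(\theta)$ directly from the series defining $\Lambda$. Since every $n$ occurring in the sum is odd, $\sin(n\pi(\theta+1))=\sin(n\pi\theta+n\pi)=-\sin(n\pi\theta)$, so the $n$-th summand of $\Lambda(\theta+1)$ is the negative of the $n$-th summand of $\Lambda(\theta)$; as the series converges under the stated hypothesis on $\theta$ (and that hypothesis is visibly unchanged by $\theta\mapsto\theta+1$), summing gives the claim. One could instead observe $G(\tau+1)=-G(\tau)$ — immediate from $z^{n/2}=e^{i\pi n\tau}$ with $n$ odd — and invoke Proposition~\ref{6.5n}, but the direct computation is shorter and self-contained.

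For the second law I would simply combine the functional identity
\[
\Lambda\Bigl(\frac{\theta}{2\theta+1}\Bigr)(2\theta+1)^2-\Lambda(\theta)=32\,P_{\phi_2}(\theta),
\]
which was derived in the proof of Proposition~\ref{6.6n} by letting $\tau\to\theta$ in Bol's identity~\eqref{Gmod} for $\phi=\phi_2$ (so $c\tau+d=2\tau+1$) and applying Proposition~\ref{6.5n}, with the value $P_{\phi_2}(\tau)=\tfrac{1}{32}(2\tau^2+2\tau+1)$ supplied by Proposition~\ref{6.6n}. This yields~\eqref{eq39n} at once. Before writing this down I would check that the right-hand side makes sense, i.e. that $\phi_2(\theta)=\theta/(2\theta+1)$ again lies in the range of validity of $\Lambda$: if $\theta$ is diophantine then so is $\phi_2(\theta)$, since $\phi_2\in\Gamma(2)\subset\SL_2(\Z)$ and Möbius transformations in $\SL_2(\Z)$ preserve the irrationality exponent; and if $\theta=q/(2p)$ then $\phi_2(\theta)=q/(2(q+p))$ again has even denominator.

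I do not expect a genuine obstacle here. The one point that truly needed care — that the boundary value of $G$ along the image curve $\phi_2(\theta+it)$ as $t\newsmallsearrow 0$ agrees with its boundary value along the vertical line through $\phi_2(\theta)$, which is the Stolz-angle form of Abel's theorem applied via the conformality of $\phi_2$ (a vertical segment meets $\R$ orthogonally, hence so does its $\phi_2$-image) — has already been handled inside the proof of Proposition~\ref{6.6n}, and is the same argument as in the footnote to Proposition~\ref{prop67} for $\arg g$. I would only flag it for the reader's convenience, and otherwise present the corollary as the one-line substitution described above.
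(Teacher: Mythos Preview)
Your proposal is correct and follows essentially the same approach as the paper: the first law is read off directly from the series (odd $n$ gives $\sin(n\pi(\theta+1))=-\sin(n\pi\theta)$), and the second law is obtained by substituting the value of $P_{\phi_2}$ from Proposition~\ref{6.6n} into the identity~\eqref{Lambdamod}. Your extra checks (that $\phi_2$ preserves the diophantine/even-denominator hypothesis, and the Stolz-angle remark) are sound and merely make explicit what the paper leaves implicit.
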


\begin{proof} The first transformation law is immediate from the definition of  $\Lambda(\theta)$. The second one follows from Proposition~\ref{6.6n} and Eq.~(\ref{Lambdamod}).
\end{proof}

As already mentioned, our original motivation for the above computation was to find a transformation law for the genus two signature $\sigma_2(\frac q p)$ in the spirit of the transformation law (\ref{eq29}) for the genus zero signature $S(\frac q p)$. Indeed, in view of Theorem \ref{asympto} which for generic 
irrational $\theta\in[0,1]$
 exhibits $\Lambda(\theta)$ as a limit of  
$\sigma_2(\frac {q_k}{p_k})/p_k^2$ when $\frac {q_k}{p_k}$ is the sequence of convergents of $\theta$,  it is natural to expect that the transformation law \eqref{eq39n} lifts to an arithmetic avatar in terms of signatures. We found in this way the following conjectural statement that we checked by computer for all $0<q<p<100$. 

\begin{Conjecture}\label{6.8}
For any coprime odd integers $0<q<p$, we have 
\begin{equation}\label{eq39b}
  \sigma_2\Bigl(\frac{q}{2q+p}\Bigr)-\sigma_2\Bigl(\frac{q}{p}\Bigr)=2q^2+2pq+p^2-1.
  \end{equation}
\end{Conjecture}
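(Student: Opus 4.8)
\medskip

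\noindent\textbf{An approach towards Conjecture~\ref{6.8}.}
The plan is to lift the genus-$0$ argument of Proposition~\ref{prop67} to genus $2$. There, $S(\frac qp)$ was realized as a boundary value of $\arg g$ for the weight-$\frac12$ form $g$, and the transformation law \eqref{eq29} dropped out of the modularity of $g$ together with one explicit example pinning down an integer ambiguity. In genus $2$ the analogous object should be the \emph{third-order} Eichler integral $G$ of the weight-$4$ form $E_4^{\mathrm{odd}}$, and $\sigma_2(\frac qp)$ should be, up to explicit lower-order terms, a renormalized boundary value of $G$ at $\tau=\frac qp$. The key missing ingredient — which the paper explicitly says is still lacking — is an \emph{exact}, non-asymptotic counterpart of Proposition~\ref{6.5n} relating $\sigma_2(\frac qp)$ directly to $G$. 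Granting such a relation, the transformation law should follow from Bol's identity \eqref{Gmod} together with the computation $P_{\phi_2}(\tau)=\frac1{32}(2\tau^2+2\tau+1)$ of Proposition~\ref{6.6n}: multiplying the functional equation \eqref{eq39n} by $p^2$ and setting $\theta=q/p$ turns its right-hand side into exactly $2q^2+2pq+p^2$, which is the conjectural answer up to the additive constant $-1$.

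To produce the exact relation, I would start from the trigonometric formula of Theorem~\ref{sigma2} and rewrite it in the spirit of Proposition~\ref{Smodular}, using the product expansion $G(\tau)=\frac1{(i\pi)^3}\sum_{n\ \mathrm{odd}}\frac{z^{n/2}}{n^3(1-z^n)}$ established in the proof of Proposition~\ref{6.5n}. Evaluating $G$ along the vertical segment $\tau=\frac qp+it$ and splitting the sum according to whether $p\mid n$, one expects the terms with $p\nmid n$ to contribute, after Abel summation, a finite limit equal to an explicit multiple of $\sigma_2(\frac qp)$ — this is where the ``miraculous cancellations'' of Theorem~\ref{sigma2} and the parity identities \eqref{eq6n}--\eqref{eq7n} should reappear. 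The terms with $p\mid n$, which are exactly the ones causing the pole of $\Lambda$ at odd-denominator rationals and which vanished for free in the genus-$0$ case because $\zeta^p$ was then real, now produce a genuine divergence of order $1/t$; subtracting this singular part is the renormalization, and tracking the resulting bounded remainder — together with the ``constant'' term $\frac{1-p^2}{6p^2}$ of Theorem~\ref{sigma2}, which is not itself a polynomial and so must recombine with it — is what should contribute the $-1$ in \eqref{eq39b}.

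With the exact relation in hand, one applies \eqref{Gmod} for $\phi_2(\tau)=\frac{\tau}{2\tau+1}$, which maps $\frac qp$ to $\frac q{2q+p}$ with automorphy factor $(2\tau+1)^2=(2q+p)^2/p^2$, and assembles the pieces. As in Proposition~\ref{prop67}, the argument should a priori give the right-hand side of \eqref{eq39b} only up to an additive error; the analysis should show this error is a constant (just as the $\arg$-ambiguity was in Proposition~\ref{prop67}), which is then pinned down by the single example $q=1$: there \eqref{eq39b} reduces to the elementary Verlinde identity $\dim\boV_{p+2}(S_2)-\dim\boV_p(S_2)=(p+1)^2$, which follows from $\dim\boV_p(S_2)=\frac{p^3-p}{6}$ and confirms the value $-1$.

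The main obstacle is clearly the middle step: making rigorous the passage from the finite integer $\sigma_2(\frac qp)$ to a renormalized boundary value of the third-order Eichler integral $G$. Already in genus $0$ this was the delicate part, requiring the iterated Abel-summation / integration-by-parts estimates of Lemmas~\ref{lem63} and~\ref{lem64} to see that a superficially divergent sum of signs converges to $\frac\pi4\epsilon_k$; here the same mechanism must be run three times (for a primitive of order $3$) while simultaneously separating a divergent part from a polynomial error. An alternative, purely combinatorial route would apply Brion's formula directly to the dilated simplices $\Delta_{2q+p}$ and $\Delta_p$ and compare the resulting trigonometric sums term by term, hoping the difference telescopes to $2q^2+2pq+p^2-1$; but this inherits the unexplained cancellations flagged after Theorem~\ref{sigma2}, now for a larger polytope. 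In either approach, a clean proof seems to require first establishing that $\sigma_2(\frac qp)$ is, up to explicit terms, the quantum modular form attached to $G$ in Zagier's sense.
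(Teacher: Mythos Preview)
The statement is labeled a \emph{Conjecture} in the paper, and the paper does not prove it: it is verified numerically for $0<q<p<100$ and motivated by the modular transformation law \eqref{eq39n} for $\Lambda$, but the authors explicitly state that ``a precise (non asymptotic) relation between the signature $\sigma_g(q/p)$ and the Eichler integral is still lacking'' and that ``such a relation would presumably lead to a proof of the conjectured transformation law for $\sigma_2$.'' So there is no proof in the paper to compare your attempt against.

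Your proposal is not a proof either, and you are upfront about this: you correctly isolate the missing step as the exact (not merely asymptotic) identification of $\sigma_2(\frac qp)$ with a renormalized boundary value of the Eichler integral $G$, and you note that everything downstream --- Bol's identity, the period polynomial $P_{\phi_2}$, and the pinning-down of the additive constant via the $q=1$ Verlinde case --- would then be routine. This is precisely the strategy the paper itself sketches in the final paragraph of Section~\ref{sectiong2}. Your additional suggestions (splitting the $G$-sum according to $p\mid n$ versus $p\nmid n$, expecting a $1/t$ divergence from the former, and tracing the $-1$ to the interaction of the constant term $\frac{1-p^2}{6p^2}$ with the renormalization) are plausible heuristics but remain speculative; in particular, the claim that the $p\nmid n$ terms limit to an explicit multiple of $\sigma_2(\frac qp)$ after Abel summation is exactly the hard analytic statement that neither you nor the paper can currently justify.

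In short: your approach matches the paper's intended line of attack, you have correctly located the genuine gap, and the gap is real --- the conjecture remains open.
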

\begin{Remark}
{\em The conjecture holds when $q=1$ by a direct computation using that $\sigma_2(\frac 1 p)=\dim \boV_p(S_2)=\binom{p+1}{3}$.}
\end{Remark}
\begin{Remark}{\em
    Conjecture~\ref{6.8} implies the transformation law (\ref{eq39n}) for $\Lambda(\theta)$  for almost all irrational $\theta$: just divide \eqref{eq39b} by $p^2$ and apply it to the sequence of convergents of $\theta$, then apply  Theorem~\ref{asympto}. But the converse is
    not true.}
\end{Remark}

In early February 2026, we were informed by Y. Murakami \cite{YMu} that he has obtained a proof of Conjecture~\ref{6.8}  starting from our trigonometric formula in Theorem~\ref{sigma2}. A nice feature of his proof is that he shows that the difference between $\frac{1}{p^2}\sigma_2(q/p)$ and its asymptotic limit is also the boundary value of an Eichler integral, this time of a modular form of weight 2.
Still, it would be very interesting to find a direct relation between $\sigma_2(\frac{q}{p})$ defined as a sum of signs as in Formula \eqref{defsigma2} and boundary values of modular forms (or their Eichler integrals) in the spirit of Section \ref{SectionS}.
It looks also challenging to us to extend these results to higher genus and include marked points in the statements. If $\sigma_g(\frac{q}{p})$ indeed grows as $p^{2g-2}$ for any $g\ge 2$ as in
Question~\ref{51},
  one might hope that its asymptotics is governed by an Eichler integral of a modular form of weight $2g$ as in the genus $2$ case. 

\section{Appendix}
The following code in Sage computes $\sigma_g(\frac{q}{p})=S(p,q,g)=\Tr_{V_{\frac q p}/\Q}\Omega^{g-1}$. It is very efficient and relies on a formula for $\Omega\in V_{q/p}$ proven in \cite{S2B}.

\tiny{
\begin{verbatim}
R.<x> = QQ[]
# The following code computes the trace of the multiplication by P(x) on the ring QQ[x]/Q(x).
def Trace(P,Q): 
    n=Q.degree()
    s=0
    for k in range(n):
        U=(x^k*P).quo_rem(Q)[1]+x^(n+1)
        s=s+(U.list())[k]
    return(s)
# This code computes the signature function
def S(p,q,g):
    M=Matrix(p-1,p-1)
    for i in range(p-2):
        M[i+1,i]=1
        M[i,i+1]=(-1)^(1+int((i+1)*q/p)+int((i+2)*q/p))
    P=M.charpoly()
    Pprime=diff(P,x)
    iota=M[0:p-2,0:p-2].charpoly()
    Om=((-iota*Pprime)^(g-1)).quo_rem(P)[1]
    return(Trace(Om,P))
\end{verbatim}}

\end{document}